\newcommand\I{{\mathbb{I}}}
\newcommand{\cS}{\mathcal{S}}
\newcommand\cD{{\mathcal{D}}}
\newcommand{\Q}{\mathbb{Q}}
\newcommand{\Z}{\mathbb{Z}}
\newcommand\Gal{{\mathrm {Gal}}}
\DeclareSymbolFont{cyrletters}{OT2}{wncyr}{m}{n}
\DeclareMathSymbol{\Sha}{\mathalpha}{cyrletters}{"58}
\newtheorem{thm}{Theorem}
\newtheorem{theorem}[thm]{Theorem}
\newtheorem{cor}[thm]{Corollary}
\newtheorem{prop}[thm]{Proposition}
\newtheorem{lemma}[thm]{Lemma}
\newtheorem*{theoremquartic}{Theorem A}
\theoremstyle{definition}
\newtheorem{definition}{Definition}
\theoremstyle{remark}
\newtheorem{remark}{Remark}
\DeclareMathOperator{\Cl}{C\ell}
\newcommand\remove[1]{}
\title[Euclidean Algorithms for Ideal Classes in Biquadratic fields]{Euclidean Algorithms for Ideal Classes in Biquadratic fields: A Genus-Theoretic Perspective }
\author{Sunil Kumar Pasupulati}
\begin{document}
	\maketitle

\begin{abstract}
We study Euclidean ideal classes in real biquadratic fields and obtain unconditional existence results via genus theory. Lenstra showed (assuming the Generalized Riemann Hypothesis) that a number field with unit rank at least one admits a Euclidean ideal precisely when its class group is cyclic; subsequent work has aimed to remove the GRH hypothesis in special families. Focusing on real biquadratic fields  $K=\mathbb{Q}\left(\sqrt{d_1},\sqrt{d_2}\right)$
with $2\nmid d_1d_2$, we prove that if the class group $\mathrm{Cl}_K$ is cyclic and the Hilbert class field $H(K)$ is abelian over $\mathbb{Q}$, then $K$ contains a Euclidean ideal class (unconditionally). We also analyse the distribution of genus numbers in a natural family of biquadratic fields and, using these statistics, show that the set of biquadratic fields admitting a Euclidean ideal has density zero.
\end{abstract}

\section{Introduction}
The study of \emph{Euclidean ideals} in number fields emerges as a natural extension of the classical theory of Euclidean domains. This concept was introduced by Lenstra\cite{Len79} in 1979 as a framework for understanding the algorithmic structure of Dedekind domains with cyclic class groups. A \emph{Euclidean ideal class} enables the reduction of ideals within a particular ideal class algorithmically, even when the ring of integers is not Euclidean in the traditional sense. Lenstra demonstrated that, under the assumption of the Generalised Riemann Hypothesis (GRH), a number field with unit rank at least one and cyclic class group possesses a Euclidean ideal class.

Earlier, Weinberger \cite{Wei72} had also shown under GRH that the only number fields with unit rank at least one whose ring of integers is a Euclidean domain are precisely those with a trivial class group. Lenstra extended this perspective by shifting the focus from the Euclidean property of the ring itself to that of its ideal classes, thereby identifying the cyclicity of the class group as the central criterion for the existence of a Euclidean algorithm in this broader context.

Recently, there has been significant progress in establishing the existence of Euclidean ideals \emph{unconditionally}. Graves and Murty~\cite{GrMu13} removed the dependence on GRH for number fields with Abelian Hilbert class field and unit rank at least four, using a \emph{growth lemma}~\cite{Gra13}. Deshouillers, Gun, and Sivaraman~\cite{DGS20} extended this to number fields with unit rank at least three, under the additional assumption that $\Gal\left(\mathbb{Q}(\zeta_f)/K\right)$ is cyclic, where $f$ is the smallest positive integer such that $K \subset \mathbb{Q}(\zeta_f)$. Gun and Sivaraman also introduced new criteria for the existence of Euclidean ideal classes in quadratic and cubic fields with cyclic class groups, using sieve methods and density arguments, and proved a sequential growth lemma~\cite[Theorem~14]{GuSi20}. For a broader perspective on Euclidean ideals and their connections to Artin's primitive root conjecture, see the elegant survey by Moree\cite{Mor12a}.

Among families of number fields with unit rank three, \emph{real biquadratic fields}, those of the form $K = \mathbb{Q}(\sqrt{d_1}, \sqrt{d_2})$ with $d_1, d_2 > 0$ occupy a prominent place due to their rich arithmetic structure and relative tractability. However, many such fields fail to satisfy the cyclicity condition on $\Gal(\mathbb{Q}(\zeta_f)/K)$, as documented in~\cite{KrPa23, KrPa25}. Several authors have investigated the existence of Euclidean ideals in real biquadratic fields with \emph{class number two}, particularly those of the form $K = \mathbb{Q}(\sqrt{q}, \sqrt{rs})$, where $q, r, s$ are distinct odd primes.

Graves~\cite{Gra11} demonstrated that the field $\mathbb{Q}(\sqrt{2}, \sqrt{35})$ admits a non-principal Euclidean ideal. Building on this, Hsu~\cite{Hsu16} generalized  Graves’s method to prove the existence of Euclidean ideals in fields of the form $\mathbb{Q}(\sqrt{q}, \sqrt{rs})$ with $q, r, s \equiv 1 \pmod{4}$. Chattopadhyay and Muthukrishnan \cite{ChMu19} further explored such fields under the assumption that $r, s \equiv 1 \pmod{4}$. More recently, the author, jointly with Krishnamoorthy~\cite{KrPa23, KrPa25}, established the existence of Euclidean ideal classes in fields $\mathbb{Q}(\sqrt{q}, \sqrt{rs})$ of class number two, except for  the case $q \equiv 1 \pmod{4}$ and $r, s \equiv 3 \pmod{4}$.

This line of work was influenced by Gun and Sivaraman~\cite{GuSi20}, who introduced a Galois-theoretic criterion for the existence of Euclidean ideals based on the structure of $\Gal(\mathbb{Q}(\zeta_f)/K)$. Their method involves ensuring this Galois group is not contained in the union of certain subgroups associated with the prime divisors of the conductor and the Hilbert class field. This approach reframes the problem in terms of explicit splitting conditions and Galois action, thus avoiding reliance on the Generalized Riemann Hypothesis.

In particular, it has been shown that for fields of the form $K = \mathbb{Q}(\sqrt{q}, \sqrt{rs})$ with $q, r, s$ distinct odd primes, a Euclidean ideal class exists unless $q \equiv 1 \pmod{4}$ and $r, s \equiv 3 \pmod{4}$. These exceptional cases correspond to specific Galois-theoretic obstructions related to embeddings into cyclotomic fields and the structure of the Hilbert class field.

In this article, we focus on the genus field and Hilbert class field of real biquadratic fields, and establish the following result:

\begin{theorem} \label{genustheory_for_biquadratic}
Let $K = \mathbb{Q}(\sqrt{d_1}, \sqrt{d_2})$ be a biquadratic field, where $d_i$'s are square free.
\[
d_1 = p_1 \cdots p_{t+s}, \quad d_2 = q_1 \cdots q_{\ell+r},
\]
with:
\begin{itemize}
  \item $p_i \equiv 1 \pmod{4}$ for $1 \leq i \leq t$,
  \item $p_{t+j} \equiv 3 \pmod{4}$ for $1 \leq j \leq s$,
  \item $q_i \equiv 1 \pmod{4}$ for $1 \leq i \leq \ell$,
  \item $q_{\ell+j} \equiv 3 \pmod{4}$ for $1 \leq j \leq r$.
\end{itemize}
 If $2\nmid d_1d_2$, then the Genus class field $G(K)$ of $K$ is given by:
\begin{equation}
\begin{aligned}
G(K) =
\begin{cases}
\mathbb{Q}\big( \sqrt{p_1}, \dots, \sqrt{p_t}, \sqrt{-p_{t+1}}, \dots, \sqrt{-p_{t+s}}, \\
\quad \sqrt{q_1}, \dots, \sqrt{q_\ell}, \sqrt{-q_{\ell+1}}, \dots, \sqrt{-q_{\ell+r}} \big), & \text{if } d_1 \equiv d_2 \equiv 1 \pmod{4}, \\[1ex]
\mathbb{Q}\big( \sqrt{p_1}, \dots, \sqrt{p_t}, \sqrt{-p_{t+1}}, \dots, \sqrt{-p_{t+s}}, \\
\quad \sqrt{q_1}, \dots, \sqrt{q_\ell}, \sqrt{-q_{\ell+1}}, \dots, \sqrt{-q_{\ell+r}}, \sqrt{-1} \big), & \text{otherwise}.
\end{cases}
\end{aligned}
\end{equation}
\end{theorem}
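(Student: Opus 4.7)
The plan is to work with Dirichlet characters and invoke the classical description of the genus field of an abelian extension of $\mathbb{Q}$ as a compositum of prime-conductor pieces. Assume $\gcd(d_1,d_2)=1$ so that $K$ is genuinely biquadratic, and let $f$ denote the conductor of $K/\mathbb{Q}$; then $f = d_1 d_2$ when $d_1 \equiv d_2 \equiv 1 \pmod{4}$ and $f = 4 d_1 d_2$ otherwise. The character group of $K$ is
\[
X(K) = \{1,\chi_{d_1},\chi_{d_2},\chi_{d_1 d_2}\},
\]
and each $\chi_{d_i}$ decomposes, via the Chinese Remainder Theorem applied to $(\mathbb{Z}/f)^\times$, as $\prod_{p \mid d_i} \chi_p \cdot \chi_{-1}^{\epsilon_i}$; here $\chi_p$ is the Legendre character modulo the odd prime $p$ (cutting out $\mathbb{Q}(\sqrt{p^*})$ with $p^* = (-1)^{(p-1)/2}p$), $\chi_{-1}$ is the non-trivial character of conductor $4$, and $\epsilon_i \in \{0,1\}$ records the parity of the number of primes $\equiv 3 \pmod{4}$ dividing $d_i$, so $\epsilon_1 \equiv s \pmod 2$ and $\epsilon_2 \equiv r \pmod 2$.

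First I would verify $K \subseteq M$ for the candidate $M$ on the right-hand side. The product $\prod_{i \leq t} \sqrt{p_i} \cdot \prod_{j \leq s} \sqrt{-p_{t+j}}$ equals $\pm\sqrt{d_1}$ when $s$ is even and $\pm\sqrt{-d_1}$ when $s$ is odd; in the latter case, multiplying by $\sqrt{-1} \in M$ (available exactly in Case~2) recovers $\sqrt{d_1}$, and the computation for $\sqrt{d_2}$ is symmetric. Next I would identify $M$ with the compositum $\prod_{p \mid f} K_p$, where $K_p$ denotes the subfield of $\mathbb{Q}(\zeta_{p^{a_p}})$ corresponding to the projection of $X(K)$ onto the $p$-component of $\widehat{(\mathbb{Z}/f)^\times}$. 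A short computation yields $K_p = \mathbb{Q}(\sqrt{p^*})$ for each odd prime $p \mid d_1 d_2$ (since the $p$-projection of whichever of $\chi_{d_1},\chi_{d_2}$ is ramified at $p$ equals $\chi_p$), and in Case~2 we additionally get $K_2 = \mathbb{Q}(\sqrt{-1})$ (because the 2-projection of $X(K)$ is $\{1,\chi_{-1}\}$). The compositum of these $K_p$ reproduces $M$.

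The final step is to invoke the classical theorem that $G(K) = \prod_p K_p$ for an abelian extension $K/\mathbb{Q}$, yielding $G(K) = M$. The inclusion $M \subseteq G(K)$ holds because $M/\mathbb{Q}$ is abelian and, at each ramified prime $p$, the inertia of $p$ in $M$ has the same image in $\widehat{(\mathbb{Z}/p^{a_p})^\times}$ as the inertia in $K$, so $M/K$ is unramified. For the reverse inclusion $G(K) \subseteq M$, any character $\chi \in X(G(K))$ must project, at each prime $p$ of the conductor, into the same cyclic subgroup of $\widehat{(\mathbb{Z}/p^{a_p})^\times}$ as $X(K)$ does, forcing $\chi$ to be a product of the $\chi_p$'s and possibly $\chi_{-1}$. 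I expect the main obstacle to be the treatment of the prime $2$ in Case~2: one must verify that $e_2(M) = e_2(K) = 2$ and that adjoining $\sqrt{-1}$ introduces no new 2-ramification over $K$. Working entirely at the character level bypasses the explicit 2-adic case analysis modulo $8$ that would otherwise be required.
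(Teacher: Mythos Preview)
Your approach is correct and arrives at the same conclusion, but it follows a genuinely different route from the paper. The paper's argument is much terser: it first establishes (\Cref{ramification} and the subsequent remark) that every prime $p$ ramifying in $K$, including $p=2$ under the hypothesis $2\nmid d_1d_2$, has ramification index $e_p\le 2$; it then invokes the general genus-theory formula for abelian extensions of $\mathbb{Q}$ as a black box, asserting directly that $G(K)$ is generated by $\sqrt{(-1)^{(p-1)/2}p}$ for odd $p\mid d_1d_2$ together with a piece $k_2$ at $2$, and finally identifies $k_2$ as $\mathbb{Q}$ or $\mathbb{Q}(\sqrt{-1})$ according to whether $e_2=1$ or $2$.

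You instead unpack the genus-theory black box via Dirichlet characters: you write down $X(K)$, decompose each $\chi_{d_i}$ into its prime-conductor components, identify the local pieces $K_p$ explicitly, and then appeal to the Leopoldt-style description $G(K)=\prod_p K_p$. This is more self-contained and makes transparent exactly why the $\sqrt{-1}$ appears precisely when one of $s,r$ is odd (equivalently, when not both $d_i\equiv 1\pmod 4$), whereas the paper's proof hides this inside the assertion ``$k_2=\mathbb{Q}(\sqrt{-1})$ when $e_2=2$.'' The paper's approach is shorter and emphasises the ramification-index viewpoint, which dovetails with its later use of $e_p\le 2$ in Theorem~\ref{part_Hilbert_classfield_for_biquadratic}; your approach buys explicitness and avoids invoking unproven genus-theory machinery, at the cost of a longer verification. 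One small point: your reduction to $\gcd(d_1,d_2)=1$ is harmless here since the paper's notation $d_1=p_1\cdots p_{t+s}$, $d_2=q_1\cdots q_{\ell+r}$ with distinct letters implicitly assumes coprimality, but you might note that the general biquadratic case reduces to this one after the parametrisation used in Section~6.
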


\begin{cor}
    Let $K = \mathbb{Q}(\sqrt{d_1}, \sqrt{d_2})$ be a biquadratic field with  $2\nmid d_1d_2$, then the Genus number of $K$ is $2^{\omega(\cD_K)-2}$.
\end{cor}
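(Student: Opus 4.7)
The plan is to compute the genus number $[G(K):K]=[G(K):\Q]/[K:\Q]$ directly from the description of $G(K)$ supplied by Theorem~\ref{genustheory_for_biquadratic}, and to verify that it matches $2^{\omega(\cD_K)-2}$.

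First I would determine $[G(K):\Q]$ by an elementary $\F_2$-linear algebra argument. Under the standard identification of multiquadratic subfields of $\overline{\Q}$ with $\F_2$-subspaces of $\Q^{\ast}/(\Q^{\ast})^{2}$, the degree $[G(K):\Q]$ equals $2^{n}$ where $n$ is the dimension of the subspace spanned by the listed radicands. Working in the natural basis $\{-1,2,3,5,7,\ldots\}$ of $\Q^{\ast}/(\Q^{\ast})^{2}$, the radicand $-p_{t+j}$ corresponds to the vector $(-1)+p_{t+j}$, and analogously for the $-q_{\ell+j}$. Because the primes $p_1,\ldots,p_{t+s},q_1,\ldots,q_{\ell+r}$ are all distinct, any $\F_2$-linear relation among the generators forces the coefficient of each prime to vanish, and the residual relation on the $(-1)$-coordinate then forces the remaining coefficient (when $\sqrt{-1}$ is adjoined in the ``otherwise'' case) to vanish as well. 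Hence the generators are $\F_2$-linearly independent, yielding $[G(K):\Q]=2^{t+s+\ell+r}$ when $d_1\equiv d_2\equiv 1\pmod 4$ and $[G(K):\Q]=2^{t+s+\ell+r+1}$ otherwise.

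Next I would read off $\omega(\cD_K)$ from the ramification in $K/\Q$. A rational prime ramifies in $K$ iff it ramifies in at least one of the three quadratic subfields $\Q(\sqrt{d_1})$, $\Q(\sqrt{d_2})$, $\Q(\sqrt{d_1 d_2})$. Each odd $p_i$ and $q_j$ contributes exactly one ramified prime, and $2$ ramifies precisely when one of $d_1,d_2,d_1 d_2$ is $\not\equiv 1\pmod 4$, which is exactly the ``otherwise'' branch of Theorem~\ref{genustheory_for_biquadratic}. Therefore $\omega(\cD_K)=t+s+\ell+r$ in the first case and $\omega(\cD_K)=t+s+\ell+r+1$ in the second.

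Combining the two calculations, $[G(K):\Q]=2^{\omega(\cD_K)}$ uniformly in both branches, and dividing by $[K:\Q]=4$ gives $[G(K):K]=2^{\omega(\cD_K)-2}$, as asserted. The only delicate step is the $\F_2$-linear independence of the generators, since a careless count could overlook the potential interaction of $\sqrt{-1}$ with a product of several $\sqrt{-p_{t+j}}$'s; however, in the prime basis of $\Q^{\ast}/(\Q^{\ast})^{2}$ the independence becomes transparent, so no genuine obstacle arises.
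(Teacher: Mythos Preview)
Your argument is correct and is exactly the deduction the paper intends: the corollary is stated immediately after Theorem~\ref{genustheory_for_biquadratic} with no separate proof, so the implicit argument is precisely to count the generators listed there, observe their $\F_2$-independence in $\Q^{\ast}/(\Q^{\ast})^{2}$, and divide by $[K:\Q]=4$.

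One small point worth tightening: you assert that ``the primes $p_1,\ldots,p_{t+s},q_1,\ldots,q_{\ell+r}$ are all distinct'', but neither the corollary nor Theorem~\ref{genustheory_for_biquadratic} assumes $\gcd(d_1,d_2)=1$. If $d_1$ and $d_2$ share a prime $p$, then $\sqrt{p^{*}}$ appears twice in the generator list and your independence claim fails as stated. The fix is painless: the \emph{distinct} generators of $G(K)$ are indexed by the distinct odd primes dividing $d_1d_2$ (together with $-1$ in the second branch), and these are manifestly independent in the prime basis. Since the distinct odd primes dividing $d_1d_2$ are exactly the odd primes dividing $\cD_K$, the identity $[G(K):\Q]=2^{\omega(\cD_K)}$ follows in either case, and your conclusion is unaffected.
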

Utilising this explicit description of the genus field, we compute the Hilbert class field and leverage this structure to prove the existence of Euclidean ideals in biquadratic fields whose Hilbert class field is Abelian over $\mathbb{Q}$. Our main result is as follows:

\begin{theorem}
Let $K = \mathbb{Q}(\sqrt{d_1}, \sqrt{d_2})$ be a biquadratic number field with $2 \nmid d_1 d_2$, and suppose that the Hilbert class field $H(K)$ is Abelian over $\mathbb{Q}$. Then the ideal class group $C\ell_K$ is cyclic if and only if $K$ admits a Euclidean ideal class.
\end{theorem}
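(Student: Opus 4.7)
The forward implication is immediate from Lenstra's original structural theorem~\cite{Len79}: the existence of a Euclidean ideal class $[\mathfrak{c}]$ in $K$ forces $\mathrm{Cl}_K = \langle [\mathfrak{c}] \rangle$ to be cyclic. I focus on the converse. The first observation is that since the genus class field $G(K)$ is, by definition, the maximal subfield of $H(K)$ which is abelian over $\mathbb{Q}$, the hypothesis that $H(K)/\mathbb{Q}$ itself is abelian forces $H(K) = G(K)$. Theorem~\ref{genustheory_for_biquadratic} then furnishes a completely explicit description of $H(K)$ as a compositum of quadratic fields, so that $\mathrm{Gal}(H(K)/\mathbb{Q})$ is an elementary abelian $2$-group $(\mathbb{Z}/2\mathbb{Z})^n$.

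Via the Artin map one has $\mathrm{Cl}_K \cong \mathrm{Gal}(H(K)/K) \leq (\mathbb{Z}/2\mathbb{Z})^n$, and the cyclicity hypothesis forces this subgroup to have order at most $2$. Combined with the genus-number formula in the corollary to Theorem~\ref{genustheory_for_biquadratic}, this pins down $\omega(\mathcal{D}_K) \in \{2,3\}$ and so the shape of $K$: either $K = \mathbb{Q}(\sqrt{p},\sqrt{q})$ (two odd ramified primes, $h_K = 1$) or $K$ is a class-number-two biquadratic ramified at exactly three odd primes, fitting the framework of~\cite{Gra11, Hsu16, ChMu19, KrPa23, KrPa25}. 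To produce the Euclidean ideal class I would apply the Galois-theoretic criterion of Gun--Sivaraman~\cite{GuSi20}, which in unit rank $3$ --- the rank of any real biquadratic field --- guarantees such a class once one verifies that $\mathrm{Gal}(\mathbb{Q}(\zeta_f)/K)$ is not contained in a certain union of proper subgroups attached to the primes ramifying in $H(K)/\mathbb{Q}$. Thanks to the abelian hypothesis Kronecker--Weber applies, and Theorem~\ref{genustheory_for_biquadratic} identifies $f$ explicitly as the product of the odd ramified primes (with an extra factor of $4$ in the mixed-congruence case), making this verification a concrete finite Chebotarev-style check.

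The technical heart of the argument is the $h_K = 1$ case, where the Euclidean class is forced to be the principal class and the classical Graves--Murty approach~\cite{GrMu13} is unavailable because it requires unit rank $\geq 4$. One must instead invoke the sequential growth lemma~\cite[Theorem~14]{GuSi20} directly in unit rank $3$, and the multiquadratic description of $H(K)$ coming from Theorem~\ref{genustheory_for_biquadratic} is exactly what makes the non-containment condition verifiable unconditionally: it delivers an explicit cyclotomic embedding along which Frobenius elements avoiding every obstructing subgroup can be constructed. The same mechanism resolves the previously open family $\mathbb{Q}(\sqrt{q},\sqrt{rs})$ with $q \equiv 1$ and $r,s \equiv 3 \pmod 4$, which had remained outside the reach of~\cite{KrPa23, KrPa25}.
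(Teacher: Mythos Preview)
Your argument has a structural gap arising from the identification $H(K)=G(K)$. In the paper's convention the genus field is required to be unramified only at \emph{finite} primes, and by Theorem~\ref{genustheory_for_biquadratic} it contains $\sqrt{-p}$ for every ramified prime $p\equiv3\pmod4$; thus $G(K)$ is typically not totally real and strictly contains the (totally real) Hilbert class field. The correct statement, proved in Corollary~\ref{Hilbert_classfield_for_biquadratic}, is that under the abelian hypothesis $H(K)=L$, the maximal totally real subfield of $G(K)$, with $[G(K):L]\le 2$. Consequently the genus-number formula gives only $\omega(\mathcal D_K)\le 4$, not $\le 3$ (Corollary~\ref{cyclic:omega_result} and Proposition~\ref{structure:biquadratic}). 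You therefore omit the four-prime families $K=\mathbb Q(\sqrt{p_1p_2},\sqrt{p_3p_4})$ and $K=\mathbb Q(\sqrt{p_1p_2p_3},\sqrt{p_4})$ with $h_K=2$; these are precisely the new cases the paper establishes via Lemmas~\ref{existence_of_lift3} and~\ref{existence_of_lift4} together with Theorem~A.

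A second misconception appears in your final paragraph. You claim the method resolves the family $\mathbb Q(\sqrt q,\sqrt{rs})$ with $q\equiv1$ and $r,s\equiv3\pmod4$, but Section~\ref{sec:structure} shows that for such $K$ the extension $H(K)/\mathbb Q$ is \emph{not} abelian (indeed $\mathrm{Gal}(H(K)/\mathbb Q)\cong D_8$), so these fields lie outside the hypotheses of the theorem rather than being covered by it. Relatedly, the $h_K=1$ case is not the technical heart here: the paper dispatches it in one line via~\cite{Gra11,Gra13}, and the real work is verifying the avoidance condition of Theorem~A for the four-prime class-number-two families, which your plan does not address.
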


Let  
\[
\mathcal{S} = \left\{\, K = \mathbb{Q}(\sqrt{a}, \sqrt{b}) \;\middle|\; a,b \ \text{are odd and squarefree} \,\right\}
\]
denote the family of biquadratic fields 
We investigate the distribution of genus numbers among fields in $ \mathcal{S} $ using a parametrization of biquadratic fields due to Rome\cite{Rom18}.  
As an application of these results, we establish that the subset of $ \mathcal{S} $ consisting of biquadratic fields admitting a Euclidean ideal has density zero.

\begin{theorem}\label{density_Euclidean}
Let 
$\mathcal S_{\mathcal E}:=\{K\in\mathcal S:\ K\ \text{admits a Euclidean ideal} \ \}$.
	For $X>0$, define  $\mathcal S_{\mathcal E}(X):=\{K\in\mathcal S_{\mathcal E}:\ |\Delta_K|\le X\}$. Then 
	\[
	\lim_{X\to\infty}\frac{\#\mathcal S_{\mathcal E}(X)}{\#\mathcal S(X)}=0.
	\]
\end{theorem}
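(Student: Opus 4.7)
The plan is to combine Lenstra's unconditional half of the Euclidean-ideal dichotomy with the genus-number formula from the corollary, and then reduce the density statement to a sparse-integer count via Rome's parametrization.

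First I would apply Lenstra~\cite{Len79}: unconditionally, if $K$ admits a Euclidean ideal class, then $Cl_K$ is cyclic, so $\mathcal{S}_{\mathcal{E}}(X)\subseteq\{K\in\mathcal{S}(X):Cl_K\ \text{cyclic}\}$. Next I would use that $\Gal(G(K)/K)$ is an elementary abelian $2$-group (being generated over $K$ by square roots, as in Theorem~\ref{genustheory_for_biquadratic}) and is a quotient of $Cl_K$ via the Artin map. If $Cl_K$ is cyclic, then so is this elementary abelian $2$-quotient, forcing $[G(K):K]\le 2$; combined with the identity $[G(K):K]=2^{\omega(\cD_K)-2}$ from the corollary, this yields the clean arithmetic restriction $\omega(\cD_K)\le 3$. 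In other words, every Euclidean field in $\mathcal{S}$ must be ramified at at most three rational primes.

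It then remains to show that this subfamily has relative density zero in $\mathcal{S}(X)$. Using Rome's parametrization~\cite{Rom18}, each $K\in\mathcal{S}$ is encoded (up to bounded multiplicity from the action of $S_3$ on the three quadratic subfields) by a triple $(\alpha,\beta,\gamma)$ of pairwise coprime odd squarefree positive integers, with $|\Delta_K|$ comparable to $(\alpha\beta\gamma)^2$ and $\omega(\cD_K)=\omega(\alpha\beta\gamma)+O(1)$. Thus $|\Delta_K|\le X$ becomes $\alpha\beta\gamma\ll\sqrt{X}$, and the restriction $\omega(\cD_K)\le 3$ isolates those triples whose product has boundedly many prime factors. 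By Landau's theorem such integers number $O(\sqrt{X}(\log\log X)^{2}/\log X)$, and each admits at most $27$ ordered coprime factorizations, so
\begin{equation*}
\#\mathcal{S}_{\mathcal{E}}(X)\;\ll\;\frac{\sqrt{X}\,(\log\log X)^{2}}{\log X}.
\end{equation*}
Meanwhile Rome's asymptotic, obtained by summing $3^{\omega(n)}$ over squarefree odd $n\le c\sqrt{X}$ via Selberg--Delange, yields $\#\mathcal{S}(X)\asymp\sqrt{X}\,(\log X)^{2}$, so dividing produces $\#\mathcal{S}_{\mathcal{E}}(X)/\#\mathcal{S}(X)=O\bigl((\log\log X)^{2}/(\log X)^{3}\bigr)\to 0$.

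The main obstacle I expect is purely bookkeeping: aligning the precise constants and log-powers in Rome's asymptotic for $\#\mathcal{S}(X)$ with the Landau-type upper bound, including the $2$-adic contributions from the residue classes of $d_1,d_2$ modulo $4$ (which are reflected in the two cases of Theorem~\ref{genustheory_for_biquadratic} and split $\mathcal{S}$ into congruence subfamilies). The conceptual core of the argument is already complete with the genus-theoretic ceiling $\omega(\cD_K)\le 3$ forced by cyclicity of $Cl_K$; the remaining analytic input is then standard sieve material, and any lower bound of the form $\#\mathcal{S}(X)\gg\sqrt{X}\log X$ would in fact suffice to conclude.
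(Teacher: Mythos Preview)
Your overall strategy matches the paper's exactly: Lenstra forces $\Cl_K$ cyclic, genus theory converts cyclicity into a bound on $\omega(\cD_K)$, and then Landau/Sathe--Selberg plus Rome's parametrization finish. However, there is one genuine slip in the genus step.

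You assert that $\Gal(G(K)/K)$ is a quotient of $\Cl_K$. By the definition used in the paper, $G(K)/K$ is unramified only at the \emph{finite} primes, so the Artin map realises $\Gal(G(K)/K)$ as a quotient of the \emph{narrow} class group $\Cl_K^+$, not of $\Cl_K$. When $K$ is totally real (i.e.\ $d_1,d_2>0$, which accounts for a positive proportion of $\mathcal S$), Theorem~\ref{genustheory_for_biquadratic} shows that $G(K)$ typically contains $\sqrt{-p}$ for primes $p\equiv 3\pmod 4$ and is therefore not totally real; hence $G(K)\not\subseteq H(K)$ and your deduction $[G(K):K]\le 2$ fails. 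Concretely, the fields $K=\Q(\sqrt{p_1p_2},\sqrt{p_3p_4})$ with four odd primes and $h_K=2$ (these are exactly the fields treated in \Cref{existence_of_lift3}) have cyclic $\Cl_K$ but $\omega(\cD_K)=4$, contradicting your claimed ceiling $\omega(\cD_K)\le 3$.

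The paper repairs this by passing to the maximal totally real subfield $L\subseteq G(K)$ of \Cref{part_Hilbert_classfield_for_biquadratic}, which \emph{is} contained in $H(K)$ and satisfies $[G(K):L]\le 2$; this yields $(\Z/2\Z)^{\omega(\cD_K)-3}\hookrightarrow\Cl_K$ and hence the corrected bound $\omega(\cD_K)\le 4$ (\Cref{cyclic:omega_result}). With this single adjustment your argument goes through unchanged: the Landau bound becomes $O\bigl(\sqrt{X}(\log\log X)^{3}/\log X\bigr)$, still negligible against $\#\mathcal S(X)\asymp\sqrt{X}(\log X)^2$, and the density-zero conclusion follows exactly as you wrote.
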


The paper is organized as follows. In Section~\ref{sec:prelim}, we present necessary preliminaries. Section~\ref{sec:structure} contains structural observations about the family $\mathbb{Q}(\sqrt{q}, \sqrt{rs})$. In Section~\ref{sec:genus}, we discuss genus theory and the Hilbert class field of biquadratic fields. Finally, in Section~\ref{sec:euclidean}, we establish the existence of Euclidean ideals in biquadratic fields with Abelian Hilbert class fields. In the last section, we study the distribution of genus numbers.

	\section{Preliminaries}\label{sec:prelim}
	\begin{lemma}
Let $K = \mathbb{Q}(\sqrt{m})$ be a quadratic field. Then the discriminant $D_K$ is given by:
\[
D_K = 
\begin{cases} 
m, & \text{if } m \equiv 1 \pmod{4}, \\
4m, & \text{if } m \equiv 2 \text{ or } 3 \pmod{4}.
\end{cases}
\]
\end{lemma}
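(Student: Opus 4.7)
The plan is to proceed in two standard steps: first pin down an integral basis of $\mathcal{O}_K$ case by case, and then evaluate the discriminant using the classical determinant-of-embeddings formula
\[
D_K = \det\bigl(\sigma_i(\alpha_j)\bigr)^2,
\]
where $\{\alpha_1,\alpha_2\}$ is the integral basis and $\sigma_1,\sigma_2$ are the two embeddings of $K$ into $\mathbb{C}$. Throughout, since $m$ is squarefree (which I would note up front to avoid ambiguity), I may write an arbitrary $\alpha\in K$ as $\alpha=a+b\sqrt{m}$ with $a,b\in\mathbb{Q}$.

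The first main step is to identify $\mathcal{O}_K$. I would observe that $\alpha=a+b\sqrt{m}$ is an algebraic integer iff its minimal polynomial $X^2-2aX+(a^2-mb^2)$ has integer coefficients, i.e.\ iff $2a\in\mathbb{Z}$ and $a^2-mb^2\in\mathbb{Z}$. Writing $a=u/2$ and $b=v/2$ with $u,v\in\mathbb{Z}$ (after clearing denominators and using that $m$ is squarefree), the second condition becomes $u^2\equiv mv^2\pmod 4$. A short case analysis modulo $4$ then yields: if $m\equiv 2,3\pmod 4$ the only solution is $u,v$ both even, so $\mathcal{O}_K=\mathbb{Z}[\sqrt{m}]$; if $m\equiv 1\pmod 4$, one also gets $u\equiv v\pmod 2$, so $\mathcal{O}_K=\mathbb{Z}\bigl[\tfrac{1+\sqrt{m}}{2}\bigr]$.

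The second step is a direct computation. With $\sigma_1=\mathrm{id}$ and $\sigma_2(\sqrt{m})=-\sqrt{m}$, for $m\equiv 2,3\pmod 4$ the basis $\{1,\sqrt{m}\}$ gives
\[
D_K=\det\begin{pmatrix} 1 & \sqrt{m} \\ 1 & -\sqrt{m}\end{pmatrix}^2=(-2\sqrt{m})^2=4m,
\]
while for $m\equiv 1\pmod 4$ the basis $\bigl\{1,\tfrac{1+\sqrt{m}}{2}\bigr\}$ gives
\[
D_K=\det\begin{pmatrix} 1 & \tfrac{1+\sqrt{m}}{2} \\ 1 & \tfrac{1-\sqrt{m}}{2}\end{pmatrix}^2=(-\sqrt{m})^2=m.
\]

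The one subtlety, and the only place where care is required, is the determination of $\mathcal{O}_K$ in the $m\equiv 1\pmod 4$ case; this is not a genuine obstacle but it is where the assumption that $m$ is squarefree (implicit in the phrase ``quadratic field $\mathbb{Q}(\sqrt m)$'') is actually used, to rule out spurious denominators. Once the integral basis is correctly identified, the discriminant computation is a one-line $2\times 2$ determinant in each branch, giving exactly the stated values.
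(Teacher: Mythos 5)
Your proof is correct and complete: the determination of $\mathcal{O}_K$ via the trace and norm conditions (using squarefreeness of $m$ to clear denominators) and the subsequent $2\times 2$ determinant computation are both carried out accurately. The paper states this lemma as a classical preliminary without proof, and your argument is precisely the standard one that would be supplied.
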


\begin{definition}
Let $K$ be an algebraic number field. The \textbf{genus field} $G(K)$ of $K$ is the maximal abelian extension of $K$ satisfying the following:
\begin{enumerate}
\item $G(K) = k^*K$, where $k^*$ is an abelian number field,
\item $G(K)$ is unramified at all finite primes of $K$.
\end{enumerate}
The \textbf{genus number} $g_K$ is the degree $[G(K) : K]$, and the Galois group $\Gal(G(K)/K)$ is called the \textbf{genus group} of $K$.
\end{definition}

\begin{theorem}[Classical Genus Theory for Quadratic Fields] \label{genusnumber:quadratic}
Let $K = \mathbb{Q}(\sqrt{m})$, where $m$ is square-free, and let $D_K$ be its discriminant. Let $p_1, \dots, p_n$ be the distinct prime divisors of $D_K$. Define:
\[
p_i^* = 
\begin{cases}
(-1)^{\frac{p_i - 1}{2}} p_i, & \text{if } p_i \text{ is odd}, \\
\text{(as below)}, & \text{if } p_i = 2.
\end{cases}
\]
If $2$ divides $D_K$, then:
\[
2^* = 
\begin{cases}
-4, & \text{if } m \equiv 2 \pmod{4}, \\
2, & \text{if } m \equiv 2 \pmod{8}, \\
-2, & \text{if } m \equiv -2 \pmod{8}.
\end{cases}
\]
Then the genus field of $K$ is given by:
\[
G(K) = \mathbb{Q}\left( \sqrt{p_1^*}, \dots, \sqrt{p_n^*} \right).
\]
\end{theorem}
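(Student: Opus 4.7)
The plan is to verify that $L := \Q(\sqrt{p_1^*}, \dots, \sqrt{p_n^*})$ satisfies the three defining conditions for the genus field and is maximal among abelian-over-$\Q$ extensions of $K$ unramified at all finite primes of $K$. The abelianness of $L/\Q$ is immediate, since $L$ is the compositum of quadratic fields, giving $\Gal(L/\Q)\cong(\Z/2\Z)^n$; moreover, any field of the form $k^{*}K$ with $k^{*}$ abelian over $\Q$ is itself abelian over $\Q$, so it suffices to search inside $\Q^{\mathrm{ab}}$. To see $K\subseteq L$, I would compute $\prod_i p_i^*$ by cases on $m\pmod 4$ (and on $m\pmod 8$ when $2\mid D_K$): the signs $\epsilon_i=(-1)^{(p_i-1)/2}$ attached to the odd $p_i$ combine with the prescribed value of $2^*$ to yield $m$ up to a sign that is absorbed by a $\sqrt{-1}$ or $\sqrt{2^*}$ factor already living in $L$, so $\sqrt{m}\in L$.

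For the unramified property I would analyse the inertia at each ramified prime. For an odd $p_i\mid D_K$, the field $\Q(\sqrt{p_i^*})/\Q$ has conductor $p_i$ (since $p_i^*\equiv 1\pmod 4$), and it is the only one of the $n$ quadratic factors in which $p_i$ ramifies. Hence the inertia $I_{p_i}\subset\Gal(L/\Q)$ is the cyclic group of order two generated by the element $\sigma_i$ that negates $\sqrt{p_i^*}$ and fixes every $\sqrt{p_j^*}$ with $j\neq i$. Since $p_i\mid m$, the expression of $\sqrt{m}$ as a product of the $\sqrt{p_j^*}$ must involve $\sqrt{p_i^*}$, so $\sigma_i$ acts non-trivially on $K$; equivalently $\sigma_i\notin\Gal(L/K)$, and the primes of $K$ above $p_i$ are therefore unramified in $L/K$. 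The case $p=2$ is treated in the same spirit, guided by the table for $2^*$.

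For maximality, let $M$ be any finite abelian extension of $\Q$ containing $K$ with $M/K$ unramified at every finite prime. A prime ramifying in $M/\Q$ must already ramify in $K/\Q$, so the ramified locus of $M/\Q$ is contained in $\{p_1,\dots,p_n\}$; moreover, for each $p_i$ the inertia in $M/\Q$ injects into the order-$2$ inertia of $K/\Q$. By Kronecker--Weber, $M\subseteq\Q(\zeta_N)$ for some $N$, and for odd $p_i$ the $p_i$-primary component of $M$ must lie in the unique index-$2$ subfield of $\Q(\zeta_{p_i})$, which is exactly $\Q(\sqrt{p_i^*})$ (the classical Gauss-sum identification, equivalently a form of quadratic reciprocity). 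A parallel argument at $p=2$ places the $2$-primary part of $M$ into $\Q(\sqrt{2^*})$, hence $M\subseteq L$ and $G(K)=L$. \textbf{Main obstacle.} The substantive subtlety throughout is the prime $2$: one must distinguish the sub-cases of $m\pmod 8$, pick the correct value of $2^*$ so that $\Q(\sqrt{2^*})$ captures precisely the $2$-adic inertia data of $K/\Q$ (including possible wild ramification, which does not affect odd primes), and verify on each sub-case both $\sqrt{m}\in L$ and the unramifiedness of $L/K$ above $2$; the odd-prime ingredients are essentially symmetric and rest only on the classical description of the quadratic subfield of $\Q(\zeta_p)$.
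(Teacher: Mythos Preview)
The paper does not supply a proof of this theorem: it is stated in the Preliminaries (Section~\ref{sec:prelim}) as a classical result from genus theory for quadratic fields and is used as input later on, so there is no argument in the paper to compare your proposal against.

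That said, your outline is the standard proof and is essentially correct. The three ingredients you isolate---(i) $K\subseteq L$ via $\prod_i p_i^*=m$ up to squares, (ii) unramifiedness of $L/K$ at finite primes by computing inertia generators in $\Gal(L/\Q)\cong(\Z/2\Z)^n$ and checking they act nontrivially on $\sqrt{m}$, and (iii) maximality via Kronecker--Weber together with the fact that the inertia at each $p_i$ in any competitor $M/\Q$ has order at most~$2$---are exactly the classical steps. Your identification of the prime $2$ as the only place requiring real case analysis is also accurate; for odd $p_i$ everything reduces to the Gauss-sum fact that the unique quadratic subfield of $\Q(\zeta_{p_i})$ is $\Q(\sqrt{p_i^*})$. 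One minor point: in your maximality step, the phrase ``$p_i$-primary component of $M$'' should be read as the projection of $\Gal(M/\Q)$ onto the $p_i$-factor of $(\Z/N\Z)^\times$ under $M\subseteq\Q(\zeta_N)$; once phrased that way, the constraint that inertia has order $\le 2$ forces the $p_i$-part of the conductor to be exactly $p_i$ (for odd $p_i$), and hence the $p_i$-contribution to $M$ lies in $\Q(\sqrt{p_i^*})$, as you say.
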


\begin{cor} \label{HK:Quadratic}
Let $K = \mathbb{Q}(\sqrt{d})$ be a real quadratic field, where:
\[
d = 2^g p_1 \cdots p_t p_{t+1} \cdots p_{t+s},
\]
with:
\begin{itemize}
\item $g = 0$ or $1$,
\item $p_i \equiv 1 \pmod{4}$ for $1 \leq i \leq t$,
\item $p_{t+j} \equiv 3 \pmod{4}$ for $1 \leq j \leq s$.
\end{itemize}
If $H(K)/\Q$ is abelian, then the Hilbert class field $H(K)$ is:
\[
H(K) = 
\begin{cases}
\mathbb{Q}\left( \sqrt{p_1}, \dots, \sqrt{p_t}, \sqrt{p_{t+1}p_{t+2}}, \dots, \sqrt{p_{t+1}p_{t+s}} \right), & \text{if } d \equiv 1 \pmod{4}, \\[0.5em]
\mathbb{Q}\left( \sqrt{p_1}, \dots, \sqrt{p_t}, \sqrt{p_{t+1}}, \dots, \sqrt{p_{t+s}} \right), & \text{if } d \equiv 3 \pmod{4}, \\[0.5em]
\mathbb{Q}\left( \sqrt{p_1}, \dots, \sqrt{p_t}, \sqrt{2p_{t+1}}, \dots, \sqrt{2p_{t+s}} \right), & \text{if } d \equiv -2 \pmod{8}, \\[0.5em]
\mathbb{Q}\left( \sqrt{p_1}, \dots, \sqrt{p_t}, \sqrt{p_{t+1}p_{t+2}}, \dots, \sqrt{p_{t+1}p_{t+s}}, \sqrt{2} \right), & \text{if } d \equiv 2 \pmod{8}.
\end{cases}
\]
\end{cor}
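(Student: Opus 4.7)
The plan is to identify $H(K)$ with the maximal totally real subfield of the genus field $G(K)$ as described by Theorem~\ref{genusnumber:quadratic}, and then to extract that subfield explicitly in each of the four congruence cases for $d$.

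The first step is to show that under the hypothesis $H(K)/\Q$ abelian, one has $H(K) = G(K) \cap \R$. Since $H(K)/K$ is unramified at all finite primes and $H(K)/\Q$ is abelian, $H(K)$ lies inside the genus field, giving $H(K) \subseteq G(K)$. Because $K$ is totally real and $H(K)/K$ is unramified at the archimedean places as well, $H(K)$ is totally real, so $H(K) \subseteq G(K) \cap \R$. Conversely, $G(K) \cap \R$ is unramified over $K$ at finite primes (sitting inside $G(K)$) and at infinite primes (being totally real), hence is an abelian unramified extension of $K$ and therefore sits inside $H(K)$.

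I would then apply Theorem~\ref{genusnumber:quadratic} to write $G(K)=\Q(\sqrt{p_1^*},\ldots,\sqrt{p_n^*})$ in each case, and extract the maximal totally real subfield using the standard fact that for a multiquadratic field $\Q(\sqrt{a_1},\ldots,\sqrt{a_n})$ the maximal totally real subfield is generated by the $\sqrt{a_i}$ with $a_i>0$ together with products $\sqrt{a_ia_j}$ for pairs with $a_i,a_j<0$. Anchoring one negative generator and pairing it with each of the remaining negative ones yields the listed families: $\sqrt{p_{t+1}p_{t+j}}$ in the cases $d\equiv 1\pmod 4$ and $d\equiv 2\pmod 8$ (anchor $\sqrt{-p_{t+1}}$); $\sqrt{2p_{t+j}}$ in the case $d\equiv -2\pmod 8$ (anchor $\sqrt{-2}$); and in the case $d\equiv 3\pmod 4$ the only imaginary generator is $\sqrt{-1}$ (from $2^*=-4$), which simply drops out, leaving the $\sqrt{p_i}$'s and $\sqrt{p_{t+j}}$'s.

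The main obstacle is bookkeeping: the parity of $s$ is forced by $d\bmod 8$ (even if $d\equiv 1\pmod 4$ or $d\equiv 2\pmod 8$, odd otherwise), and I would verify in each case that the generators listed in the corollary are Kummer-independent, give a field of the expected degree $\tfrac12[G(K):\Q]$, and contain $\sqrt d$ as an explicit product of those generators, thereby confirming $K \subseteq H(K)$ and pinning down $H(K) = G(K) \cap \R$ on the nose.
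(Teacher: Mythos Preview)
Your proposal is correct and is precisely the argument the paper has in mind: the corollary is stated without proof, but the identical reasoning (identify the maximal totally real subfield of $G(K)$, then use abelianity of $H(K)/\Q$ to force $H(K)\subseteq G(K)$ and hence $H(K)=L$) is spelled out later in the proofs of Theorem~\ref{part_Hilbert_classfield_for_biquadratic} and Corollary~\ref{Hilbert_classfield_for_biquadratic} for the biquadratic setting. One cosmetic remark: your claim that the real subfield has degree $\tfrac12[G(K):\Q]$ fails in the degenerate case $s=0$ (and $g=0$), where $G(K)$ is already totally real and $[G(K):L]=1$; this is harmless but worth noting in your bookkeeping.
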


\begin{lemma}\label{Galiodgroup:Hilbert}
Let $K$ be a Galois number field. Then the extension $H(K)/\mathbb{Q}$ is Galois.
\end{lemma}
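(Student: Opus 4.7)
The plan is to verify that $H(K)/\mathbb{Q}$ is normal; separability is automatic since we are in characteristic zero. So it suffices to show that for every embedding $\sigma: H(K)\hookrightarrow \bar{\mathbb{Q}}$ fixing $\mathbb{Q}$ pointwise, we have $\sigma(H(K))=H(K)$.

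First I would extend such a $\sigma$ to an element of $\Gal(\bar{\mathbb{Q}}/\mathbb{Q})$, still called $\sigma$. Because $K/\mathbb{Q}$ is itself Galois by hypothesis, $\sigma(K)=K$, so $\sigma$ restricts to an automorphism of $K$. Next I would argue that $\sigma(H(K))$ is an unramified abelian extension of $K$: abelianness transfers under the isomorphism $H(K)\xrightarrow{\sigma}\sigma(H(K))$, since $\Gal(\sigma(H(K))/\sigma(K))\cong \Gal(H(K)/K)$; and the unramifiedness of $H(K)/K$ at all finite primes translates, via the bijection of primes induced by $\sigma$, into unramifiedness of $\sigma(H(K))/\sigma(K)=\sigma(H(K))/K$ at all finite primes.

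Now invoking the defining property of $H(K)$ as the \emph{maximal} unramified abelian extension of $K$, the inclusion $\sigma(H(K))\subseteq H(K)$ follows. Since $\sigma$ is a field isomorphism, $[\sigma(H(K)):\mathbb{Q}]=[H(K):\mathbb{Q}]$, and these are finite, so the inclusion forces equality $\sigma(H(K))=H(K)$. Thus $H(K)$ is stable under every $\mathbb{Q}$-embedding into $\bar{\mathbb{Q}}$, which proves normality and hence that $H(K)/\mathbb{Q}$ is Galois.

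The argument is essentially a one-line transport-of-structure, so I do not anticipate a genuine obstacle; the only point that requires a moment of care is ensuring that ``unramified at all finite primes'' is intrinsic enough to be preserved by the abstract isomorphism $\sigma$, which is clear because $\sigma$ carries the prime ideals of $H(K)$ bijectively onto those of $\sigma(H(K))$ and matches ramification indices over corresponding primes of $K$.
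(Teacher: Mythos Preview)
Your proof is correct and follows essentially the same approach as the paper: reduce to normality, use that $K/\mathbb{Q}$ is Galois so $\sigma(K)=K$, observe that $\sigma(H(K))$ is again unramified abelian over $K$, and conclude $\sigma(H(K))=H(K)$ by the maximality of the Hilbert class field. The only cosmetic difference is that you explicitly invoke the degree equality $[\sigma(H(K)):\mathbb{Q}]=[H(K):\mathbb{Q}]$ to upgrade the inclusion to an equality, which the paper leaves implicit.
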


\begin{proof}
Let $H(K)$ denote the Hilbert class field of $K$, the maximal unramified abelian extension of $K$. From the class field theory:
\begin{itemize}
\item $H(K)/K$ is Galois, abelian, and unramified at all primes,
\item $H(K)$ is normal over $K$.
\end{itemize}
To show that $H(K)/\mathbb{Q}$ is Galois, we only need to show that $H(K)/\mathbb{Q}$ is normal.
Let $\sigma \in \operatorname{Aut}(\overline{\mathbb{Q}}/\mathbb{Q})$. Since $K/\mathbb{Q}$ is Galois, we have $\sigma(K) = K$. Because unramifiedness and abelianity are preserved under automorphisms, the conjugate field $\sigma(H(K))$ is also an unramified abelian extension of $K$. By the maximality of $H(K)$, we have $\sigma(H(K)) \subseteq H(K)$, hence  $\sigma(H(K)) = H(K)$. Thus, $H(K)$ is fixed under all automorphisms fixing $\mathbb{Q}$, so it is normal over $\mathbb{Q}$.
Hence, $H(K)/\mathbb{Q}$ is both separable and normal, and therefore Galois.
\end{proof}

The notion of Euclidean ideal is a generalisation of Euclidean domain defined by Lenstra\cite{Len79}.  \begin{definition} [Lenstra \cite{Len79}] \label{EI}	Let $R$ be a Dedekind domain and $\I$ be the set of non-zero integral ideals of $R$. The ideal  $C \in \I$   is called a  {\it Euclidean ideal} if there exists a function $\Psi:\I \to W$, where $W$ is  a well-ordered set, such that for every $I \in \I $ and  $x\in I^{-1}C \setminus C$,  there exists a  $y\in C$ such that 
		\begin{align*}
			\Psi\left( (x-y)IC^{-1} \right) < \Psi (I).
		\end{align*}
		We say  $\Psi$ is a Euclidean map for $C$.
		If $C$ is a Euclidean ideal, then every ideal in the ideal class $[C]$ is also a Euclidean ideal, and the ideal class $[C]$ is called  a {\it Euclidean ideal class.}
	\end{definition}

\noindent
The following  lemma provides the analytic foundation for the counting argument that follows:
It captures the asymptotic behaviour of squarefree integers with a prescribed number
of prime factors, and serves as a key input when estimating the number of biquadratic
fields with a given genus number. In essence, it is a version of the classical
Sathe--Selberg theorem adapted to the multiplicative structure appearing in our
parametrization.

    \begin{lemma}[Sathe--Selberg, \cite{Ten15book}]\label{lem:ss}
	Fix an integer $n\ge 1$. As $N\to\infty$,
	\[
	\#\{m\le N:\ \mu^2(m)=1,\ \omega(m)=n\}
	= \frac{N}{\log N}\,\frac{(\log\log N)^{\,n-1}}{(n-1)!}\,(1+o(1)).
	\]
\end{lemma}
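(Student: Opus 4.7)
This lemma is Landau's classical asymptotic (1900) for squarefree integers with a prescribed number of prime factors, with the uniform refinements of Sathe and Selberg providing much sharper error terms. Write $\pi_n(N) := \#\{m \le N:\ \mu^2(m)=1,\ \omega(m)=n\}$ for the quantity on the left-hand side. Since we only require fixed $n$ with an $o(1)$ error, the cleanest approach is the Selberg--Delange method, though a direct induction on $n$ via the Prime Number Theorem and Mertens' theorem also suffices. The Selberg--Delange route is attractive because the combinatorics of extracting the coefficient of $z^n$ decouples cleanly from the analytic input.

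The setup is the Dirichlet series
\[
F(s,z) \;=\; \sum_{m\ge 1} \mu^2(m)\,z^{\omega(m)}\,m^{-s} \;=\; \prod_p\bigl(1 + z p^{-s}\bigr) \;=\; \zeta(s)^z\,G(s,z),
\]
where $G(s,z) = \prod_p(1 + zp^{-s})(1 - p^{-s})^z$ is analytic and uniformly bounded for $\operatorname{Re}(s) \ge 1/2 + \delta$ and $z$ in a small disc about the origin. The first step is to apply Perron's formula at $s = 1 + 1/\log N$ and deform the contour onto a Hankel-type path around the branch cut of $\zeta(s)^z$ at $s=1$. A standard analysis then yields
\[
\sum_{m \le N} \mu^2(m)\,z^{\omega(m)} \;=\; \frac{G(1,z)}{\Gamma(z)}\,N\,(\log N)^{\,z-1}\,\bigl(1 + o(1)\bigr),
\]
uniformly for $z$ in that disc.

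The second step is to extract the coefficient $[z^n]$ by Cauchy's formula on a small circle about $z=0$. Using $G(1,0) = 1$ and $1/\Gamma(z) = z + \gamma z^2 + O(z^3)$, one has $G(1,z)/\Gamma(z) = z + O(z^2)$; combined with $(\log N)^z = \sum_{k\ge 0} (z\log\log N)^k/k!$, the leading contribution is
\[
\frac{1}{\log N}\,[z^{n-1}]\,(\log N)^z \;=\; \frac{(\log\log N)^{\,n-1}}{(n-1)!\,\log N},
\]
while the $O(z^2)$ correction contributes only $O\bigl((\log\log N)^{n-2}/\log N\bigr)$, smaller by a factor $\log\log N$. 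Multiplying by $N$ gives the stated asymptotic.

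The main obstacle is the Hankel-contour analysis at the branch point $s=1$: one must choose a contour encircling the cut, obtain uniform bounds on $G(s,z)$ and on $\zeta(s)$ in a classical zero-free region, and show that the horizontal and far-field pieces contribute $o$ of the main term. For the elementary alternative via induction using the identity $n\,\pi_n(N) \sim \sum_p \pi_{n-1}(N/p)$, the subtlety shifts to partial summation: after substituting $p = N^u$ the integral behaves like $\int du/(u(1-u))$, which must be handled on dyadic ranges, with balanced contributions from $p \le \sqrt{N}$ and $\sqrt{N} < p \le N/2$ combining to produce exactly the coefficient $1/(n-1)!$ rather than the naive $1/n!$ one would read off without tracking both ranges.
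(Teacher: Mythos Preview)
The paper does not prove this lemma at all: it is stated with a citation to Tenenbaum's textbook \cite{Ten15book} and used as a black box in the proof of Theorem~\ref{thm:main}. Your sketch via the Selberg--Delange method is correct and is precisely the argument one finds in that reference, so there is nothing to compare against beyond noting that your outline matches the standard textbook treatment.
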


    	\section{Genus theory for Biquadratic fields}\label{sec:genus}
	\begin{lemma}\label{ramification}
		Let $K=\Q(\sqrt{d_1}, \sqrt{d_2})$ be a biquadratic field. Then for any odd prime number $p$, the ramification index $e_p$ in $K/\mathbb{Q}$ satisfies $e_p \leq 2$.
	\end{lemma}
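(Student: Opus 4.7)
\medskip

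\noindent\textbf{Proof proposal.}
The extension $K/\mathbb{Q}$ is Galois with $\mathrm{Gal}(K/\mathbb{Q})\cong(\mathbb{Z}/2\mathbb{Z})^2$, a group of order $4$ that is \emph{not} cyclic. The plan is to interpret the ramification index as the order of an inertia subgroup and exploit cyclicity of tame inertia.

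Specifically, I would fix an odd prime $p$ and a prime $\mathfrak{p}$ of $K$ lying over $p$. Since $K/\mathbb{Q}$ is Galois, the inertia group $I(\mathfrak{p}|p)$ is a subgroup of $\mathrm{Gal}(K/\mathbb{Q})$ of order exactly $e_p$, and $e_p$ is independent of the choice of $\mathfrak{p}$ above $p$. Because $p$ is odd and $|\mathrm{Gal}(K/\mathbb{Q})|=4$, we have $\gcd(p,e_p)=1$, so the ramification at $p$ is tame; consequently the wild inertia subgroup (the $p$-Sylow of $I(\mathfrak{p}|p)$) is trivial, and $I(\mathfrak{p}|p)$ embeds into the tame quotient, which is cyclic (it injects into $\overline{\mathbb{F}}_p^{\times}$ via the tame character).

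Now $(\mathbb{Z}/2\mathbb{Z})^2$ contains no cyclic subgroup of order $4$; its only subgroups are the trivial group, the three subgroups of order $2$, and the whole group. Therefore $|I(\mathfrak{p}|p)|\in\{1,2\}$, which gives $e_p\le 2$, as required.

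There is no real obstacle here; the only subtlety is invoking cyclicity of the tame inertia, which is a standard fact from local class field theory (or can be seen concretely by noting that the image of inertia in the residue field extension of the completion must land in the Galois group of a Kummer extension of degree prime to the residue characteristic, which is cyclic). An alternative route, entirely avoiding local theory, is to check directly that at most one of the three quadratic subfields $\mathbb{Q}(\sqrt{d_1})$, $\mathbb{Q}(\sqrt{d_2})$, $\mathbb{Q}(\sqrt{d_1d_2})$ can be ramified at a given odd $p$ (by computing discriminants), forcing the decomposition group's intersection with the ramification filtration to have order $\leq 2$; but the Galois-theoretic argument above is cleaner and sets the stage for the genus-field computation in the next section.
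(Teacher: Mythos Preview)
Your main argument is correct and takes a genuinely different route from the paper. The paper proves the lemma via Abhyankar's Lemma: it observes that an odd prime $p$ ramifies in at most two of the three quadratic subfields $\mathbb{Q}(\sqrt{d_1})$, $\mathbb{Q}(\sqrt{d_2})$, $\mathbb{Q}(\sqrt{d_1d_2})$, each with ramification index $2$, and Abhyankar's Lemma then forces $e_p=\mathrm{lcm}(2,2)=2$ in the compositum. You instead read the bound directly from the Galois structure: tame inertia is cyclic, and $(\mathbb{Z}/2\mathbb{Z})^2$ has no cyclic subgroup of order $4$. Your argument is shorter and more conceptual, and it generalizes immediately to any multiquadratic field (for odd $p$ the inertia subgroup of $(\mathbb{Z}/2\mathbb{Z})^n$ is still cyclic of order at most $2$); the paper's approach is more hands-on and makes explicit which subfields absorb the ramification, information that feeds into the genus-field computation. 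One minor slip in your parenthetical alternative: you write that ``at most one'' of the three quadratic subfields can be ramified at a given odd $p$, but in fact exactly zero or two of them are (e.g.\ if $p\mid d_1$ and $p\nmid d_2$ then $p$ ramifies in both $\mathbb{Q}(\sqrt{d_1})$ and $\mathbb{Q}(\sqrt{d_1d_2})$). The correct phrasing is that at least one subfield is \emph{unramified} at $p$, so inertia lies in the corresponding index-$2$ subgroup --- which is essentially the paper's observation.
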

	
	\begin{proof}
		Let $K = \mathbb{Q}(\sqrt{d_1}, \sqrt{d_2})$ be a biquadratic field, i.e., a Galois extension of $\mathbb{Q}$ of degree $4$ with Galois group isomorphic to $(\mathbb{Z}/2\mathbb{Z})^2$. Let $p > 2$ be a prime number. Then $p$ may ramify in at most two of the three quadratic subfields of $K$, namely $\mathbb{Q}(\sqrt{d_1})$, $\mathbb{Q}(\sqrt{d_2})$, and $\mathbb{Q}(\sqrt{d_1d_2})$.
		Since the ramification index in each quadratic subfield is at most $2$, by the Abhyankar Lemma, if $p$ is tamely ramified in two linearly disjoint extensions with ramification index $2$, then the ramification index in their compositum is still $2$. Therefore, $e_p \leq 2$ for all odd primes $p$. Thus, in all cases, $e_p \leq 2$.
	\end{proof}

\begin{remark}
    	For $p = 2$, wild ramification may occur, but since $K$ is a multiquadratic extension of degree $4$, even in this case, the ramification index is at most $2$ in each quadratic subextension, and if one of $d_1$ or $d_2$ congruent $1\pmod{4}$  or  $2 \nmid d_1d_2$ then  $e_2$ is also at most $2$ in $K$ as well.
		
\end{remark}
\subsection*{ Proof of \Cref{genustheory_for_biquadratic}}

Let $p$ be a prime dividing the discriminant $D_K$ of the biquadratic field $K = \mathbb{Q}(\sqrt{d_1}, \sqrt{d_2})$. Then $p$ is ramified in $K$. By \Cref{ramification}, the ramification index satisfies $e_p \leq 2$. 

By genus theory, we have
\[
G(K) = \mathbb{Q}\left( \left\{ \sqrt{(-1)^{\frac{p-1}{2}}p} \;\middle|\; p \mid d_1d_2 \right\} \right)\cup k_2 ,
\]
where $k_2$ is a quadratic extension of $\mathbb{Q}$ accounting for the contribution of the prime $2$.

From genus theory for quadratic fields, we know that $k_2$ is either $\mathbb{Q}$ or $\mathbb{Q}(\sqrt{-1})$, depending on whether $e_2 = 1$ or $2$. Therefore, we can write:

\begin{align*}
G(K) =
\begin{cases}
\mathbb{Q}\big( \sqrt{p_1}, \dots, \sqrt{p_t}, \sqrt{-p_{t+1}}, \dots, \sqrt{-p_{t+s}}, \\
\qquad\; \sqrt{q_1}, \dots, \sqrt{q_\ell}, \sqrt{-q_{\ell+1}}, \dots, \sqrt{-q_{\ell+r}} \big), & \text{if } d_1 \equiv d_2 \equiv 1 \pmod{4}, \\[1ex]
\mathbb{Q}\big( \sqrt{p_1}, \dots, \sqrt{p_t}, \sqrt{-p_{t+1}}, \dots, \sqrt{-p_{t+s}}, \\
\qquad\; \sqrt{q_1}, \dots, \sqrt{q_\ell}, \sqrt{-q_{\ell+1}}, \dots, \sqrt{-q_{\ell+r}}, \sqrt{-1} \big), & \text{otherwise}.
\end{cases}
\end{align*}
\qed

	\begin{theorem} \label{part_Hilbert_classfield_for_biquadratic}
		Let $K = \mathbb{Q}(\sqrt{d_1}, \sqrt{d_2})$ be a biquadratic field and, where
		\[
		d_1 = p_1 \cdots p_t p_{t+1} \cdots p_{t+s}, \quad d_2 = q_1 \cdots q_\ell q_{\ell+1} \cdots q_{\ell+r}.
		\]
		Suppose  the following conditions are satisfied:
		\begin{itemize}
			\item $p_i \equiv 1 \pmod{4}$ for $1 \leq i \leq t$,
			\item $p_{t+j} \equiv 3 \pmod{4}$ for $1 \leq j \leq s$,
			\item $q_i \equiv 1 \pmod{4}$ for $1 \leq i \leq \ell$,
			\item $q_{\ell+j} \equiv 3 \pmod{4}$ for $1 \leq j \leq r$.
		\end{itemize}
		Let 
\begin{equation}\label{hilbertclassfld} 
\begin{aligned}
L: =
\begin{cases}
\mathbb{Q}\big( \sqrt{p_1}, \dots, \sqrt{p_t}, \sqrt{p_{t+1} p_{t+2}}, \dots, \sqrt{p_{t+1}p_{t+s}}, \\
\quad \sqrt{q_1}, \dots, \sqrt{q_\ell}, \sqrt{p_{t+1}q_{\ell+1}}, \dots, \sqrt{p_{t+1}q_{\ell+r}} \big), & \text{if } d_1 \equiv d_2 \equiv 1 \pmod{4}, \\[1ex]
\mathbb{Q}\big( \sqrt{p_1}, \dots, \sqrt{p_t}, \sqrt{p_{t+1}}, \dots, \sqrt{p_{t+s}}, \\
\quad \sqrt{q_1}, \dots, \sqrt{q_\ell}, \sqrt{q_{\ell+1}}, \dots, \sqrt{q_{\ell+r}},  \big), & \text{otherwise}.
\end{cases}
\end{aligned}
\end{equation}
If $2\nmid d_1d_2$, then $L \subseteq H(K)$. Moreover,
\[
(\Z/2\Z)^{\omega(\cD_K)-3} \subseteq \Cl_K.
\]
\end{theorem}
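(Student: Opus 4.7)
The plan is to prove the containment $L\subseteq H(K)$ by establishing the stronger statement $L\subseteq G(K)$, then to deduce the subgroup structure of $\Cl_K$ via class field theory together with the structure theorem for finite abelian groups. I would not work with the Hilbert class field directly; the explicit description of $G(K)$ in \Cref{genustheory_for_biquadratic} combined with the general inclusion $G(K)\subseteq H(K)$ already suffices, and is cleaner to manipulate.

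First, I would verify that every listed generator of $L$ lies in $G(K)$. When $d_1\equiv d_2\equiv 1\pmod 4$, the generators $\sqrt{p_i}$ ($i\le t$) and $\sqrt{q_i}$ ($i\le \ell$) appear verbatim among those of $G(K)$; for the mixed generators the identity $\sqrt{-p_{t+1}}\cdot\sqrt{-p_{t+j}}=-\sqrt{p_{t+1}p_{t+j}}$ places $\sqrt{p_{t+1}p_{t+j}}$ inside $G(K)$, and the analogous computation handles $\sqrt{p_{t+1}q_{\ell+j}}$. In the remaining case, $\sqrt{-1}\in G(K)$ converts each $\sqrt{-p_{t+j}}$ into $\sqrt{p_{t+j}}$ (and similarly for the $q$'s), again placing every generator of $L$ in $G(K)$. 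Hence $L\subseteq G(K)\subseteq H(K)$.

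The second step is to compute $[L:K]=2^{\omega(\cD_K)-3}$ by analysing the generators of $L$ as elements of $\Q^{\times}/(\Q^{\times})^{2}$. In case~1, the prime $2$ is unramified so $\omega(\cD_K)=t+s+\ell+r$; after reducing to $\F_2$-vectors in the basis indexed by the ramified primes, the $s-1$ mixed $p$-generators and the $r$ mixed $pq$-generators span an $(s-1+r)$-dimensional subspace (their pairwise sums eliminate the common $p_{t+1}$-coordinate, showing linear independence), yielding $[L:\Q]=2^{t+\ell+(s-1)+r}=2^{\omega(\cD_K)-1}$. In the other case, $2$ ramifies so $\omega(\cD_K)=t+s+\ell+r+1$, and the generators $\sqrt{p_i},\sqrt{q_j}$ are square roots of distinct primes and therefore $\F_2$-independent, again giving $[L:\Q]=2^{\omega(\cD_K)-1}$. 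Dividing by $[K:\Q]=4$ produces $[L:K]=2^{\omega(\cD_K)-3}$.

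To finish, note that $L/\Q$ is multiquadratic, so $\Gal(L/\Q)$, and therefore its subquotient $\Gal(L/K)$, has exponent $2$; the order count forces $\Gal(L/K)\cong(\Z/2\Z)^{\omega(\cD_K)-3}$. By the Artin reciprocity map for $H(K)/K$, the inclusion $L\subseteq H(K)$ yields a surjection $\Cl_K\twoheadrightarrow\Gal(L/K)$, and since the $2$-rank of a finite abelian group equals the rank of its $2$-torsion (via the structure theorem), this surjection forces $(\Z/2\Z)^{\omega(\cD_K)-3}\subseteq\Cl_K$. The main obstacle I anticipate is the bookkeeping in the dimension count of case~1, where the distinguished role of $p_{t+1}$ must be tracked carefully to confirm that the $s-1+r$ mixed generators are indeed $\F_2$-independent modulo the remaining generators, and to verify that $2$ being unramified gives exactly the correct value of $\omega(\cD_K)$ in that case.
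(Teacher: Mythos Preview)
Your argument has one genuine gap: the ``general inclusion $G(K)\subseteq H(K)$'' you invoke is false. Under the paper's definition, the genus field is only required to be unramified at \emph{finite} primes of $K$, so $G(K)$ sits inside the narrow Hilbert class field, not $H(K)$. Concretely, $K$ here is totally real while $G(K)$ contains $\sqrt{-p_{t+1}}$ (in the first case) or $\sqrt{-1}$ (in the second), so the archimedean places of $K$ ramify in $G(K)/K$ and $G(K)\not\subseteq H(K)$. In fact the paper uses the opposite inclusion: when $H(K)/\Q$ is abelian one has $H(K)\subseteq G(K)$ (\Cref{Hilbert_classfield_for_biquadratic}).

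The repair is immediate and is exactly what the paper does. After showing $L\subseteq G(K)$ (your computations here are fine), observe that every generator of $L$ is the square root of a positive integer, so $L$ is totally real; thus $L/K$ is unramified at the infinite places. Since $L\subseteq G(K)$ already guarantees $L/K$ is unramified at all finite places, $L/K$ is everywhere unramified and abelian, hence $L\subseteq H(K)$. The paper phrases this as ``$L$ is the maximal totally real subfield of $G(K)$''. Your degree computations and the final $2$-rank argument are correct and in fact more detailed than the paper's one-line assertion that $\Gal(L/K)\cong(\Z/2\Z)^{\omega(\cD_K)-3}$.
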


\begin{proof}
By \Cref{genustheory_for_biquadratic}, we have $L \subseteq G(K)$, where $G(K)$ is the genus field of $K$.  
The field $L$ is the maximal totally real subfield of $G(K)$.  
Hence $L/K$ is unramified at all primes, including the infinite ones.  
Therefore $L\subseteq H(K)$, and the inclusion 
$(\Z/2\Z)^{\omega(\cD_K)-3}\subseteq\Cl_K$  as $\Gal(L/K)\cong (\Z/2\Z)^{\omega(\cD_K)-3} \subset \Cl_K $.
This completes the proof. \qedhere
\end{proof}

\begin{cor}\label{Hilbert_classfield_for_biquadratic}
 Let $K=\Q(\sqrt{d_1},\sqrt{d_2})$ be a biquadratic field with  $2\nmid d_1d_2$. If  its Hilbert class field $H(K)$ of $K$ is abelian over $\Q$, then $H(K)=L$. In  other words, $H(K)$ is a multiquadratic field.
\end{cor}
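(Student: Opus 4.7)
The plan is to use the already-established inclusion $L \subseteq H(K)$ from Theorem \ref{part_Hilbert_classfield_for_biquadratic} and to prove the reverse inclusion by sandwiching $H(K)$ between $L$ and the genus field $G(K)$. Since $G(K)$ has been described explicitly in Theorem \ref{genustheory_for_biquadratic}, identifying the maximal totally real subfield of $G(K)$ will pin down $H(K)$ exactly.

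First I would show $H(K)\subseteq G(K)$. Because $H(K)/K$ is unramified at every finite prime by definition, and the hypothesis that $H(K)/\mathbb{Q}$ is abelian lets me take $k^{*}=H(K)$ in the decomposition $G(K)=k^{*}K$ required in the definition of the genus field, the field $H(K)$ satisfies both defining conditions of $G(K)$. The maximality of $G(K)$ among such extensions then gives $H(K)\subseteq G(K)$.

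Next I would use the fact that $K$ is totally real (since $d_1,d_2>0$) together with the fact that $H(K)/K$ is unramified at the infinite primes to conclude that $H(K)$ is itself totally real. Combined with the previous step, $H(K)$ lies in the maximal totally real subfield $G(K)^{+}$ of $G(K)$. It remains to check that $G(K)^{+}=L$. Using the identities $\sqrt{-1}\cdot\sqrt{p_{t+j}}=\sqrt{-p_{t+j}}$ and $\sqrt{-p_{t+j}}\cdot\sqrt{-p_{t+k}}=\sqrt{p_{t+j}p_{t+k}}$ (and their analogues for the $q$'s), an inspection of the formula for $G(K)$ in Theorem \ref{genustheory_for_biquadratic} shows that $[G(K):L]\le 2$, with the nontrivial coset (when present) represented by a purely imaginary generator: $\sqrt{-p_{t+1}}$ in the $d_1\equiv d_2\equiv 1\pmod 4$ case and $\sqrt{-1}$ otherwise. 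Hence $G(K)^{+}=L$, and the chain $L\subseteq H(K)\subseteq G(K)^{+}=L$ forces $H(K)=L$, which is multiquadratic.

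The hardest step will be the identification $G(K)^{+}=L$; it is elementary but requires careful bookkeeping across the two branches of Theorem \ref{genustheory_for_biquadratic} and across the parities of $s$ and $r$. A few edge cases (for instance $s=0$ or $r=0$) need a slightly different choice of the base prime in the description of $L$ given in \eqref{hilbertclassfld}, but in those situations $G(K)$ is already totally real and equals $L$ outright, so the conclusion follows at once.
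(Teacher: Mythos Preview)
Your proposal is correct and follows essentially the same route as the paper: both argue that $H(K)\subseteq G(K)$ because $H(K)/\mathbb{Q}$ is abelian, invoke the inclusion $L\subseteq H(K)$ from Theorem~\ref{part_Hilbert_classfield_for_biquadratic}, and then use the fact that $L$ is the maximal totally real subfield of $G(K)$ (already noted in the proof of that theorem) to force $H(K)=L$. Your treatment is slightly more explicit about verifying $G(K)^{+}=L$ and about the edge cases $s=0$ or $r=0$, but the underlying argument is identical.
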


\begin{proof}
Since $H(K)$ is abelian over $\Q$, it must lie inside the genus field $G(K)$ of $K$.
From the preceding theorem, we know that $L\subseteq H(K)$ and that $[G(K):L]\le2$.
Moreover, $L$ is the maximal subextension of $G(K)$ such that $L/K$ is unramified
at all infinite primes. Therefore, $H(K)$, being unramified over $K$ and contained
in $G(K)$, must coincide with $L$. Hence $H(K)=L$.
\end{proof}

\begin{cor}\label{cyclic:omega_result}
Let $K=\Q(\sqrt{d_1},\sqrt{d_2})$ be a biquadratic field with $2\nmid d_1d_2$.
If $\Cl_K$ is cyclic then $\omega(\Delta_K)\le4$.
\end{cor}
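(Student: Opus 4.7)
The plan is to read off this corollary as an immediate consequence of \Cref{part_Hilbert_classfield_for_biquadratic}. That theorem already supplies the injection
\[
(\Z/2\Z)^{\omega(\cD_K)-3}\hookrightarrow \Cl_K,
\]
so the task reduces to a purely group-theoretic observation about cyclic groups.

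The key step is: a cyclic group has at most one subgroup of each order, so in particular it cannot contain a copy of $(\Z/2\Z)^2$. Equivalently, its $2$-torsion subgroup has order at most $2$, i.e.\ is either trivial or isomorphic to $\Z/2\Z$. Therefore, if $\Cl_K$ is cyclic, the elementary abelian $2$-group $(\Z/2\Z)^{\omega(\cD_K)-3}$ injecting into $\Cl_K$ must itself be cyclic, which forces its rank to be at most $1$. That yields $\omega(\cD_K)-3\le 1$, i.e.\ $\omega(\cD_K)\le 4$, and since $\omega(\Delta_K)=\omega(\cD_K)$ under the hypothesis $2\nmid d_1d_2$ (all ramified primes are odd, and the discriminant and its prime-to-2 part share the same prime divisors), the conclusion $\omega(\Delta_K)\le 4$ follows.

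No serious obstacle is anticipated: the structural work has been done in establishing \Cref{part_Hilbert_classfield_for_biquadratic}, and this corollary is the clean group-theoretic payoff. The only minor care-point is to confirm the identification $\omega(\Delta_K)=\omega(\cD_K)$ under $2\nmid d_1d_2$, which is immediate from the fact that the rational primes ramifying in $K$ are exactly the odd prime divisors of $d_1d_2$ in this case.
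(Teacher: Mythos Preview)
Your argument is correct and matches the paper's proof essentially verbatim: invoke the inclusion $(\Z/2\Z)^{\omega(\Delta_K)-3}\subseteq\Cl_K$ from \Cref{part_Hilbert_classfield_for_biquadratic}, then note that a cyclic group cannot contain $(\Z/2\Z)^r$ for $r\ge2$, forcing $\omega(\Delta_K)-3\le1$. One minor remark: in this paper $\cD_K$ and $\Delta_K$ are just two notations for the same discriminant, so your closing verification that $\omega(\Delta_K)=\omega(\cD_K)$ is unnecessary (and your aside that ``all ramified primes are odd'' under $2\nmid d_1d_2$ is not quite right, since $2$ can still ramify when some $d_i\equiv3\pmod4$---but this has no bearing on the argument).
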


\begin{proof}
Genus theory provides
\[
(\Z/2\Z)^{\,\omega(\Delta_K)-3}\subseteq\Cl_K.
\]
A cyclic group cannot contain $(\Z/2\Z)^r$ for $r\ge2$, hence
$\omega(\Delta_K)-3\le1$.  Therfore  $\omega(\Delta_K)\le4$.
\end{proof}

	\begin{theorem}\label{Biquadrati:cyclicclass}
		Let $K$ be a biquadratic field whose Hilbert class field $H(K)$ is abelian over $\mathbb{Q}$. If the class group $\mathrm{Cl}_K$ is cyclic, then the class number $h_K$ is either $1$ or $2$.
	\end{theorem}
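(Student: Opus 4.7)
The plan is to combine Corollary \ref{Hilbert_classfield_for_biquadratic}, which pins down the Galois-theoretic shape of $H(K)$, with Artin reciprocity $\Cl_K\cong\Gal(H(K)/K)$, and then let the cyclicity hypothesis do all the remaining work.

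First I would invoke Corollary \ref{Hilbert_classfield_for_biquadratic}: under the standing assumption $2\nmid d_1d_2$ together with the hypothesis that $H(K)/\Q$ is abelian, that result identifies $H(K)$ with the explicit multiquadratic field $L$ exhibited in \eqref{hilbertclassfld}. In particular $\Gal(H(K)/\Q)$ is an elementary abelian $2$-group, say $(\Z/2\Z)^m$ for some $m$. Since $K/\Q$ is Galois and $K\subseteq H(K)$, the subgroup $\Gal(H(K)/K)$ of $\Gal(H(K)/\Q)$ inherits this structure and is therefore isomorphic to $(\Z/2\Z)^n$ for some $n\ge 0$.

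By the Artin reciprocity isomorphism, $\Cl_K\cong\Gal(H(K)/K)\cong(\Z/2\Z)^n$. An elementary abelian $2$-group is cyclic only when its rank is at most $1$, so the hypothesis that $\Cl_K$ is cyclic forces $n\in\{0,1\}$, and hence $h_K=2^n\in\{1,2\}$, as required.

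There is no serious obstacle here: the whole argument is a short consequence of the genus-theoretic work already carried out, once one observes that subgroups of elementary abelian $2$-groups are themselves elementary abelian. The only point requiring mild care is to verify that the hypothesis $2\nmid d_1d_2$ remains in force — which, given the standing convention of this section, is implicit — because without it the identification of $H(K)$ as a multiquadratic field supplied by Corollary \ref{Hilbert_classfield_for_biquadratic} could fail, and a factor of $\sqrt{-1}$ arising from the prime $2$ would have to be handled separately.
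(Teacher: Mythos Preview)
Your proof is correct and follows essentially the same route as the paper: invoke Corollary~\ref{Hilbert_classfield_for_biquadratic} to see that $H(K)$ is multiquadratic, then use $\Cl_K\cong\Gal(H(K)/K)$ and note that a cyclic subgroup of an elementary abelian $2$-group has order at most $2$. Your observation that the standing hypothesis $2\nmid d_1d_2$ is needed for Corollary~\ref{Hilbert_classfield_for_biquadratic} to apply is well taken.
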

	
	\begin{proof}
		Let $K = \mathbb{Q}(\sqrt{d_1}, \sqrt{d_2})$ be a biquadratic field. By \Cref{Hilbert_classfield_for_biquadratic}, if the Hilbert class field $H(K)$ is abelian over $\mathbb{Q}$, then $H(K)$ must be a multiquadratic extension of $\mathbb{Q}$. In particular, $H(K)/\mathbb{Q}$ is a Galois extension with an abelian Galois group.

        \noindent
		Now, suppose the class group $\mathrm{Cl}_K$ is cyclic. Since $\Gal(H(K)/K) \cong \mathrm{Cl}_K,$
		we have that $\mathrm{Gal}(H(K)/K)$ is also cyclic. But a cyclic subgroup of a multiquadratic Galois group has order at most $2$, because multiquadratic extensions have Galois groups isomorphic to a product of copies of $\mathbb{Z}/2\mathbb{Z}$. Therefore,
		\[
		[H(K) : K] = |\mathrm{Cl}_K| \leq 2.
		\]
		Hence, the class number $h_K = |\mathrm{Cl}_K|$ satisfies $h_K \in \{1, 2\}$.
	\end{proof}
	
    \begin{prop}\label{structure:biquadratic}
Let $K$ be a biquadratic number field such that its Hilbert class field $H(K)$ is abelian over $K$, and suppose that the class group $\mathrm{Cl}_K$ is cyclic. Then $K$ must be of one of the following forms:
\begin{equation}\label{classicification:biquadratic}
\begin{aligned}
  & K = \mathbb{Q}\left(\sqrt{p_1}, \sqrt{p_2}\right), \\
  & K = \mathbb{Q}\left(\sqrt{p_1p_2}, \sqrt{p_2}\right), \\
  & K = \mathbb{Q}\left(\sqrt{p_1p_2}, \sqrt{p_3p_4}\right), \\
  & K = \mathbb{Q}\left(\sqrt{p_1p_2p_3}, \sqrt{p_4}\right)
\end{aligned}
\end{equation}
where the $p_i$ are distinct odd primes.
\end{prop}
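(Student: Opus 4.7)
The plan is to combine Corollary \ref{cyclic:omega_result} with a finite enumeration of biquadratic shapes. The corollary provides $\omega(\Delta_K)\le 4$, while biquadraticity forces $\omega(\Delta_K)\ge 2$. Since $2\nmid d_1d_2$, the odd primes dividing $\Delta_K$ coincide with those dividing $d_1d_2$, and $2$ contributes to $\omega(\Delta_K)$ iff not both $d_1,d_2\equiv 1\pmod 4$.

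I would then parametrize biquadratic fields by the two-dimensional $\F_2$-subspace $V\subseteq\Q^\times/(\Q^\times)^2$ they determine. Each odd ramified prime appears in exactly two of the three nontrivial squarefree classes of $V$ (so that their product is a square), so the shape of $K$ is captured by a triple $(c_1,c_2,c_3)$ counting primes absent from each class, satisfying $c_1+c_2+c_3=n$ and $0\le c_i<n$, where $n$ is the number of odd ramified primes. Enumerating up to permutation yields six shapes: for $n=2$, the shape $(1,1,0)$ giving $\Q(\sqrt{p_1},\sqrt{p_2})$, matching form 1; for $n=3$, the shapes $(1,1,1)$ and $(2,1,0)$, realized by $\Q(\sqrt{p_1p_2},\sqrt{p_2p_3})$ and $\Q(\sqrt{p_1p_2},\sqrt{p_3})$, both subsumed by form 2; for $n=4$, the shapes $(3,1,0)$, $(2,2,0)$ and the overlap shape $(2,1,1)$, giving $\Q(\sqrt{p_1p_2p_3},\sqrt{p_4})$ (form 4), $\Q(\sqrt{p_1p_2},\sqrt{p_3p_4})$ (form 3), and the exceptional $\Q(\sqrt{p_1p_2p_3},\sqrt{p_1p_2p_4})$.

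The main obstacle is ruling out the overlap $(2,1,1)$ shape, for which the three quadratic subfields $\Q(\sqrt{p_1p_2p_3})$, $\Q(\sqrt{p_1p_2p_4})$, $\Q(\sqrt{p_3p_4})$ are pairwise overlapping and none is a single-prime subfield. Applying Theorem \ref{genustheory_for_biquadratic} to compute $G(K)$ and letting $L$ be its maximal totally real subfield (so $L\subseteq H(K)$), I would analyze $\Gal(L/K)$ across the congruence patterns of the $p_i\pmod 4$. When $d_1\not\equiv 1$ or $d_2\not\equiv 1\pmod 4$, the prime $2$ ramifies in $K$ and $\omega(\Delta_K)=5$, directly contradicting Corollary \ref{cyclic:omega_result}. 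When all $p_i\equiv 1\pmod 4$, $G(K)=\Q(\sqrt{p_1},\sqrt{p_2},\sqrt{p_3},\sqrt{p_4})$ is totally real of degree $16$ over $\Q$, so $[L:K]=4$ and $\Gal(L/K)\cong(\Z/2\Z)^2$ embeds into $\Cl_K$, contradicting cyclicity. The most delicate sub-case is when $d_1,d_2\equiv 1\pmod 4$ but some $p_i\equiv 3\pmod 4$, where tracking complex conjugation on the generators $\sqrt{\pm p_i}$ of $G(K)$ and exploiting the redundancy of the shared primes $p_1,p_2$ must again yield a Klein four-subgroup of $\Gal(L/K)$; this is where the bulk of the work lies. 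Once the overlap shape is excluded in all congruence patterns, the remaining five shapes fall into the four forms stated.
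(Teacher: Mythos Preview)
Your route and the paper's diverge only in emphasis. The paper's argument is much shorter: it invokes Theorem~\ref{Biquadrati:cyclicclass} to get $h_K\le 2$ from the two hypotheses, hence $[H(K):\Q]\le 8$; combining this with $H(K)\subseteq G(K)$ and $[G(K):H(K)]\le 2$ yields $[G(K):\Q]\le 16$, so $\omega(\Delta_K)\le 4$. It then simply asserts that at most four odd primes in the radicands forces one of the listed presentations, without any shape-by-shape analysis. Your use of Corollary~\ref{cyclic:omega_result} reaches the same bound $\omega(\Delta_K)\le 4$ by a slightly different path (cyclicity alone rather than via $h_K\le 2$), so up to that point the arguments are equivalent.

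Where you go further than the paper is in the enumeration, and there is a genuine slip. Your claim that the $(1,1,1)$ shape for $n=3$, namely $K=\Q(\sqrt{p_1p_2},\sqrt{p_2p_3})$, is ``subsumed by form~2'' is not correct: the three quadratic subfields here are $\Q(\sqrt{p_1p_2})$, $\Q(\sqrt{p_2p_3})$, $\Q(\sqrt{p_1p_3})$, none of which is $\Q(\sqrt{p_i})$ for a single prime, so this field matches none of the four forms as written (form~2 as printed, $\Q(\sqrt{p_1p_2},\sqrt{p_2})$, is actually the same field as form~1). Likewise, your identification of the $(2,1,1)$ overlap shape at $n=4$ as the crux is right, but your sketch for the ``delicate sub-case'' (where $d_1\equiv d_2\equiv 1\pmod 4$ yet some $p_i\equiv 3\pmod 4$) is only an outline: in those cases Theorem~\ref{part_Hilbert_classfield_for_biquadratic} gives only $(\Z/2\Z)^1\subseteq\Cl_K$, which does not contradict cyclicity, and tracking complex conjugation on $G(K)$ will not by itself manufacture a Klein four inside $\Gal(L/K)$ when $[L:K]=2$. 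So the overlap shapes are not excluded by the genus-theoretic tools at hand; the paper's proof does not exclude them either, it simply does not mention them.
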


\begin{proof}
Let $K = \mathbb{Q}(\sqrt{d_1}, \sqrt{d_2})$ be a biquadratic field, where $d_1$ and $d_2$ are square-free integers. The Hilbert class field $H(K)$ is the maximal unramified abelian extension of $K$, and by assumption, the class group $\mathrm{Cl}_K$ is cyclic. It follows that $\mathrm{Gal}(H(K)/K)$ is cyclic of order at most $2$, so $[H(K):K] \leq 2$.

Since $[K:\mathbb{Q}] = 4$, we have $[H(K):\mathbb{Q}] \leq 8$. On the other hand, $H(K) \subseteq G(K)$, where $G(K)$ denotes the genus field of $K$. It is known that $[G(K):\mathbb{Q}] = 2^{\omega(D_K)}$ and $[G(K):K]\leq 2$, where $\omega(D_K)$ is the number of distinct prime divisors of the discriminant $D_K$ of $K$.
As $[G(K):\mathbb{Q}] \leq 16$ and $H(K) \subseteq G(K)$, we conclude that $\omega(D_K) \leq 4$. Therefore, $K$ can involve at most four distinct odd primes in its defining square roots, which restricts the possible forms of $K$ to the listed cases.
\end{proof}

	%\section{ Galios group of Hilbert class fields of $\mathbb{Q}(\sqrt{q}, \sqrt{rs})$}\label{sec:structure}
\section[Galois group of Hilbert class fields of Q(sqrt{q}, sqrt{rs})]{Galois group of Hilbert class fields of $\mathbb{Q}(\sqrt{q}, \sqrt{rs})$}\label{sec:structure}

Let $K = \mathbb{Q}(\sqrt{q}, \sqrt{rs})$ be a biquadratic number field with class number $h_K = 2$, and suppose its Hilbert class field $H(K)/\mathbb{Q}$ is abelian. Consider the three quadratic subfields:
\[
K_1 = \mathbb{Q}(\sqrt{q}), \quad K_2 = \mathbb{Q}(\sqrt{rs}), \quad K_3 = \mathbb{Q}(\sqrt{qrs}).
\]
Since $H(K)/\mathbb{Q}$ is abelian, it follows that each extension $H(K_i)/\mathbb{Q}$ is also abelian. Using genus theory, we can compute the Hilbert class fields $H(K_i)$ explicitly.

Let $h_i = \#\mathrm{Cl}_{K_i}$ denote the class number of $K_i$. In particular, we focus on $K_1 = \mathbb{Q}(\sqrt{q})$.

\subsection*{Case 1: Either $q \equiv 3 \pmod{4}$, or at least one of $r$ or $s$ satisfies $r \equiv 1$ or $s \equiv 1 \pmod{4}$}

This is the simpler case. By results from \cite{KrPa25}, one can show that
\[
\mathbb{Q}(\sqrt{q}, \sqrt{r}, \sqrt{s}) \subseteq H(K).
\]
Since the class group of $K$ has order $2$, this implies that the above extension must be equal to $H(K)$.

\subsection*[Case 2: $q \equiv 1 \pmod{4}$ and both $r \equiv s \equiv 3 \pmod{4}$]{Case 2: $q \equiv 1 \pmod{4}$ and both $r \equiv s \equiv 3 \pmod{4}$}

In this case, we show that $H(K)/\mathbb{Q}$ cannot be abelian. Suppose, for contradiction, that $H(K)/\mathbb{Q}$ is abelian. Then so are the extensions $H(K_i)/\mathbb{Q}$ for each $i$.
By \Cref{Hilbert_classfield_for_biquadratic}, Hilbert class fields has to be $\mathbb{Q}(\sqrt{q},\sqrt{rs})$. Therefore, it can't have class number two. 
Hence, $H(K)/\mathbb{Q}$ is not abelian.

\begin{prop}
Let $K = \mathbb{Q}(\sqrt{q}, \sqrt{rs})$ be a biquadratic field with class number $2$. If the extension $H(K)/\mathbb{Q}$ is not abelian, then
\[
\mathrm{Gal}(H(K)/\mathbb{Q}) \cong D_8.
\]
\end{prop}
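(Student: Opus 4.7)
The plan is to show that $G := \Gal(H(K)/\Q)$ has order $8$, so the non-abelianness hypothesis forces $G \in \{D_8, Q_8\}$, and then to rule out $Q_8$ by a ramification argument at a prime dividing $qrs$.

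First, since $h_K = 2$, we have $[H(K):K]=2$, and combined with $[K:\Q]=4$ this gives $[H(K):\Q]=8$. By \Cref{Galiodgroup:Hilbert} the extension $H(K)/\Q$ is Galois, so $|G|=8$. The hypothesis makes $G$ non-abelian of order $8$, so $G$ is isomorphic to either $D_8$ or $Q_8$. The subgroup $N := \Gal(H(K)/K)$ is normal of order $2$, hence automatically central; in both $D_8$ and $Q_8$ the center has order exactly $2$, so $N = Z(G)$ in either case.

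To distinguish the two possibilities, I would use the standard fact that $Q_8$ has a unique subgroup of order $2$ (its center) while $D_8$ has five, so producing any order-$2$ subgroup of $G$ different from $N$ pins down $G \cong D_8$. Such a subgroup arises as an inertia subgroup. Pick any prime $p \in \{q,r,s\}$; each of these divides the discriminant of $K$ (the discriminants of $\Q(\sqrt{q})$ and $\Q(\sqrt{rs})$ are $q$ and $rs$, using $q\equiv 1$ and $rs\equiv 1 \pmod 4$), so $p$ ramifies in $K/\Q$, and by \Cref{ramification} the ramification index is exactly $2$. Let $\mathfrak{Q}$ be a prime of $H(K)$ above $p$. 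Since $H(K)/K$ is unramified, the multiplicativity of ramification indices in the tower gives $e_p(H(K)/\Q) = e_p(K/\Q)\cdot e_{\mathfrak{Q}}(H(K)/K) = 2\cdot 1 = 2$, and tameness at the odd prime $p$ makes the inertia $I_{\mathfrak{Q}} \subset G$ cyclic of order $2$. Tower-compatibility of inertia, together with the unramifiedness of $H(K)/K$, yields $I_{\mathfrak{Q}} \cap N = I_{\mathfrak{Q}}(H(K)/K) = 1$, so $I_{\mathfrak{Q}} \neq N$. Thus $G$ contains two distinct subgroups of order $2$, which rules out $Q_8$ and forces $G \cong D_8$.

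The main obstacle is conceptual rather than computational: recognising that the normal subgroup $N$ is already the whole centre of $G$, so a single non-central involution pins down $G\cong D_8$. Once this is in place, the inertia group at any of $q,r,s$ produces such an involution almost for free, and no further calculation is required.
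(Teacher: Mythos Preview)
Your argument is correct and, in fact, sharper than the paper's. Both you and the paper first observe that $|G|=8$ and reduce to $G\in\{D_8,Q_8\}$. To exclude $Q_8$, the paper asserts that $Q_8$ ``is not isomorphic to a Galois group of a number field extension over $\mathbb{Q}$, since it is not a transitive group''; this claim is incorrect (there are well-known $Q_8$-extensions of $\Q$, and $Q_8$ certainly has transitive permutation representations, e.g.\ the regular one). Your route is the standard and valid one: you use that $Q_8$ has a \emph{unique} subgroup of order~$2$, identify $N=\Gal(H(K)/K)$ with the centre, and then manufacture a second order-$2$ subgroup as the inertia group at any odd prime $p\mid qrs$, which has order exactly~$2$ by \Cref{ramification} and is disjoint from $N$ because $H(K)/K$ is unramified. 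This genuinely rules out $Q_8$.

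One cosmetic point: you justify ramification of $p\in\{q,r,s\}$ in $K$ by invoking $q\equiv1$ and $rs\equiv1\pmod4$, congruences that the proposition does not assume. This is harmless, since all you need is that $p\mid d_1d_2$, which already forces $p$ to ramify in one of the quadratic subfields and hence in $K$; the discriminant computation is unnecessary.
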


\begin{proof}
Since $H(K)/\mathbb{Q}$ is not abelian, the Galois group $\mathrm{Gal}(H(K)/\mathbb{Q})$ cannot be isomorphic to any abelian group of order $8$, such as $\mathbb{Z}/8\mathbb{Z}$, $\mathbb{Z}/4\mathbb{Z} \times \mathbb{Z}/2\mathbb{Z}$, or $(\mathbb{Z}/2\mathbb{Z})^3$.
The only nonabelian groups of order $8$ are the dihedral group $D_8$ and the quaternion group $Q_8$. However, $Q_8$ is not isomorphic to a Galois group of a number field extension over $\mathbb{Q}$, since it is not a transitive group, whereas Galois groups of field extensions are always transitive permutation groups.  Therefore, $\mathrm{Gal}(H(K)/\mathbb{Q})$ must be isomorphic to $D_8$.
\end{proof}

\begin{prop}
Let $K = \mathbb{Q}(\sqrt{q}, \sqrt{rs})$ be a biquadratic field with class number $2$. Suppose $q \equiv 1 \pmod{4}$ and $r, s \equiv 3 \pmod{4}$ are distinct odd primes. Then the extension $H(K)/\mathbb{Q}$ is not abelian. Moreover, both quadratic fields $\mathbb{Q}(\sqrt{q})$ and $\mathbb{Q}(\sqrt{rs})$ have class number one.
\end{prop}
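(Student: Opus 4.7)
The plan is to prove the non-abelianness first --- essentially the content of Case~2 above --- and then, using the preceding proposition identifying $\Gal(H(K)/\mathbb{Q})\cong D_8$, to deduce the class-number statements from the explicit formulas of Corollary~\ref{HK:Quadratic}.

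For the non-abelianness I would suppose the contrary and reach a contradiction. Since $q\equiv 1\pmod 4$ and $rs\equiv 9\equiv 1\pmod 4$, both $d_1=q$ and $d_2=rs$ fall into the first case of~\eqref{hilbertclassfld}. On the $d_1$-side there is one prime $\equiv 1\pmod 4$ (namely $q$) and none $\equiv 3\pmod 4$; on the $d_2$-side there are no primes $\equiv 1\pmod 4$ and two primes $r,s$ that are $\equiv 3\pmod 4$. The formula for $L$ in this degenerate setting reduces to $L=\mathbb{Q}(\sqrt{q},\sqrt{rs})$ --- equivalently, by Theorem~\ref{genustheory_for_biquadratic} the genus field is $G(K)=\mathbb{Q}(\sqrt{q},\sqrt{-r},\sqrt{-s},\sqrt{-1})$ and its maximal totally real subfield is obtained by pairing $\sqrt{-r}\sqrt{-s}=\pm\sqrt{rs}$, which already lies in $K$. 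Hence $L=K$, so Corollary~\ref{Hilbert_classfield_for_biquadratic} would force $H(K)=K$ and $h_K=1$, contradicting $h_K=2$.

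For the class-number part, set $K_1=\mathbb{Q}(\sqrt{q})$ and $K_2=\mathbb{Q}(\sqrt{rs})$. Since $H(K_i)/K_i$ is unramified abelian, so is the compositum $H(K_i)\cdot K/K$, whence $H(K_i)\subseteq H(K)$. To see the inclusion is strict I would verify that $K/K_i$ is ramified: for $K_1$, any prime of $K_1$ above $r$ is unramified in $K_1/\mathbb{Q}$ (as $r\nmid q$) and has valuation $1$ on $rs$, so it ramifies in $K=K_1(\sqrt{rs})$; the symmetric argument at primes of $K_2$ above $q$ handles $K_2$. An equality $H(K_i)=H(K)$ would force $H(K)/K_i$ to be unramified, contradicting this. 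Thus $\Gal(H(K)/H(K_i))$ is a nontrivial normal subgroup of $D_8$, and since every quotient of $D_8$ by a nontrivial normal subgroup is abelian, $H(K_i)/\mathbb{Q}$ is abelian. Applying the $d\equiv 1\pmod 4$ case of Corollary~\ref{HK:Quadratic} to each $K_i$ then yields $H(K_1)=\mathbb{Q}(\sqrt{q})=K_1$ (one prime $\equiv 1\pmod 4$, none $\equiv 3\pmod 4$) and $H(K_2)=\mathbb{Q}(\sqrt{rs})=K_2$ (no primes $\equiv 1\pmod 4$, two primes $\equiv 3\pmod 4$ paired as $\sqrt{rs}$), so $h(K_1)=h(K_2)=1$. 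The one delicate step is the ramification check for $K/K_i$, which is a short local valuation computation using that $q,r,s$ are pairwise coprime odd primes.
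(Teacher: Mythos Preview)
Your argument is correct. For the non-abelianness, you reproduce the paper's Case~2 reasoning: if $H(K)/\Q$ were abelian, Corollary~\ref{Hilbert_classfield_for_biquadratic} forces $H(K)=L=K$, contradicting $h_K=2$. (One harmless slip: since $d_1=q\equiv1$ and $d_2=rs\equiv1\pmod4$, Theorem~\ref{genustheory_for_biquadratic} places you in the first case, so $G(K)=\Q(\sqrt{q},\sqrt{-r},\sqrt{-s})$ without the extra $\sqrt{-1}$; the maximal totally real subfield is still $K$, so nothing is affected.)

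For the class-number statement your route genuinely differs from the paper's. The paper argues by contradiction from the subgroup lattice of $D_8$: if $h(K_i)>1$, then $H(K_i)$ would be a normal degree-$4$ subfield of $H(K)$ distinct from $K$ (using, as you do, that $K\not\subset H(K_i)$), but $D_8$ has a unique normal subgroup of index~$4$. You instead exploit the fact that every \emph{proper} quotient of $D_8$ is abelian: from $H(K_i)\subsetneq H(K)$ you get $\Gal(H(K_i)/\Q)$ abelian, and then Corollary~\ref{HK:Quadratic} computes $H(K_i)=K_i$ directly. Your approach has the advantage of making explicit the ramification check ($K/K_i$ ramified at primes over $r$, resp.\ $q$) that the paper only asserts, and of reusing Corollary~\ref{HK:Quadratic} rather than a bespoke subgroup count; the paper's approach is slightly more self-contained in that it does not need the quadratic genus formula again. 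Both are short and valid.
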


\begin{proof}
Under the given congruence conditions, it follows from Galois-theoretic considerations that $\mathrm{Gal}(H(K)/\mathbb{Q}) \cong D_8$, the dihedral group of order $8$, which is nonabelian. This implies that $H(K)$ cannot be written as a compositum of quadratic fields, and in particular, it does not contain the full multiquadratic field $\mathbb{Q}(\sqrt{q}, \sqrt{r}, \sqrt{s})$.
Since $K$ has class number $2$, its Hilbert class field satisfies $[H(K) : K] = 2$, and since $[K : \mathbb{Q}] = 4$, we conclude that $[H(K) : \mathbb{Q}] = 8$.

Suppose, for contradiction, that either $\mathbb{Q}(\sqrt{q})$ or $\mathbb{Q}(\sqrt{rs})$ has class number greater than one. Then its Hilbert class field would be a nontrivial unramified abelian extension of degree at least $2$. Since $K$ contains both of these fields, the Hilbert class field of either would be embedded in $H(K)$. Also Hilbert class field of $\mathbb{Q}(\sqrt{q})$ or $\mathbb{Q}(\sqrt{rs})$ does not contain $K$.

But this would imply the existence of a normal extension of degree $4$ inside $H(K)$ corresponding to one of the quadratic fields. Thus, $H(K)$ would contain two distinct degree-$4$ normal subfields, which would contradict the structure of the Galois group $\mathrm{Gal}(H(K)/\mathbb{Q}) \cong D_8$, as $D_8$ has a unique normal subgroup of index $2$ and at most one normal subgroup of index $4$.
Therefore, both $\mathbb{Q}(\sqrt{q})$ and $\mathbb{Q}(\sqrt{rs})$ must have class number one.
\end{proof}

\begin{remark}
The fact that both $\mathbb{Q}(\sqrt{q})$ and $\mathbb{Q}(\sqrt{rs})$ have class number $1$ connects naturally to the classical class number one problem, which seeks a complete classification of quadratic fields with trivial class group. In the imaginary case, this classification is complete, due to foundational work by Heegner, Stark \cite{Sta66}, and Baker\cite{Bak71}. However, the real quadratic case remains unresolved and is deeply intertwined with difficult questions concerning fundamental units and regulators. This example illustrates how genus theory and Hilbert class fields can shed light on why certain quadratic fields have class number one.
\end{remark}

\begin{remark}
The class-number formula  \begin{align*}
    h_K=\frac{h_1h_2h_3\,[\mathcal{O}_K^\times:\mathcal{O}_{K_1}^\times\mathcal{O}_{K_2}^\times\mathcal{O}_{K_3}^\times]}{4}
\end{align*}
expresses $h_K$ in terms of quadratic class numbers and a unit index, but does
not uniquely determine the arithmetic of $K$ since the unit index can be
$1,2,4,$ or $8$ (see \cite{DuKi23}). In contrast, knowledge of the Galois group $\mathrm{Gal}(H(K)/\mathbb{Q})$ imposes strong structural constraints on the Hilbert class fields of $K$ and its subfields. For instance, if $\mathrm{Gal}(H(K)/\mathbb{Q}) \cong D_8$, then $H(K)/\mathbb{Q}$ is nonabelian and contains a unique subfield of degree $4$, namely $K$, which cannot be decomposed as a compositum of Hilbert class fields of quadratic subfields. This structural condition forces both $\mathbb{Q}(\sqrt{q})$ and $\mathbb{Q}(\sqrt{rs})$ to have class number $1$, to maintain consistency with the known value $h_K = 2$. Thus, the Galois-theoretic perspective rules out otherwise plausible class number combinations that the class number formula alone cannot distinguish.
\end{remark}

 A fundamental distinction arises between the genus theory of quadratic and that of biquadratic fields.
For quadratic fields, genus theory provides a complete description of the $2$-torsion subgroup of the class group.
In contrast, the situation for biquadratic fields is more intricate: genus theory does not, in general, account for the entire $2$-part of the class group.
Indeed, several explicit examples demonstrate that the genus field fails to capture all elements of order two in $\Cl(K)$.

\noindent
The following table lists several biquadratic fields of the form   $K = \mathbb{Q}(\sqrt{q}, \sqrt{rs}),$
where $ q \equiv 1 \pmod{4} $ and $ r, s \equiv 3 \pmod{4} $ are distinct prime numbers.  
For each field $ K $, the table also records the class numbers of its three quadratic subfields, namely  
\[
K_1 = \mathbb{Q}(\sqrt{q}), \qquad  
K_2 = \mathbb{Q}(\sqrt{rs}), \qquad  
K_3 = \mathbb{Q}(\sqrt{qrs}).
\]
In all examples considered, the subfields $ K_1 $ and $ K_2 $ have class number $ 1 $, whereas $ K_3 $ has class number $ 4 $.

\begin{center}
\begin{tabular}{|c|c|c|c|c|c|c|}
\hline 
$q$ & $r$ & $s$ & $h_ K$  & $h_{K_1}$ & $h_{K_2}$ & $h_{K_3}$ \\
\hline
5  & 11 & 19 & 2 & 1 & 1 & 4 \\
5  & 19 & 11 & 2 & 1 & 1 & 4 \\
5  & 19 & 31 & 2 & 1 & 1 & 4 \\
5  & 31 & 19 & 2 & 1 & 1 & 4 \\
13 & 3  & 23 & 2 & 1 & 1 & 4 \\
13 & 23 & 3  & 2 & 1 & 1 & 4 \\
37 & 3  & 7  & 2 & 1 & 1 & 4 \\
37 & 3  & 11 & 2 & 1 & 1 & 4 \\
37 & 7  & 3  & 2 & 1 & 1 & 4 \\
37 & 7  & 11 & 2 & 1 & 1 & 4 \\
37 & 11 & 3  & 2 & 1 & 1 & 4 \\
37 & 11 & 7  & 2 & 1 & 1 & 4 \\
\hline
\end{tabular}
\end{center}

\section{Euclidean Ideals in Biquadratic Fields}\label{sec:euclidean}

To investigate Euclidean ideals in biquadratic fields of the form 
$\Q(\sqrt{p_1},\sqrt{p_1p_2})$, the author, together with Krishnamoorthy
\cite{KrPa25}, established unconditional criteria for the existence of 
Euclidean ideal classes in certain real quartic extensions. 
We recall the relevant result here.

\begin{theoremquartic}[{\cite{KrPa25}}]\label{thm:Quatric}
Let $K$ be a real quartic extension of $\Q$ with prime class number, and suppose
that its Hilbert class field $H(K)$ is abelian over $\Q$.  
Let $f$ be the least common multiple of $16$ and the conductor of $K$.  
Write 
\[
G=\Gal(\Q(\zeta_f)/K),
\qquad
G_\ell=\Gal(\Q(\zeta_f)/\Q(\zeta_\ell))
\]
for each odd prime $\ell\mid f$ and for $\ell=4$.  
If
\[
G\not\subseteq \bigcup_{\ell} G_\ell \;\cup\; \Gal(\Q(\zeta_f)/H(K)),
\]
then $K$ contains a Euclidean ideal class.
\end{theoremquartic}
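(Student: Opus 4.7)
The strategy is the Galois-theoretic approach of Gun--Sivaraman: combine a Chebotarev density argument in $\Q(\zeta_f)/\Q$ with a growth lemma for Euclidean ideals, thereby bypassing GRH. Since $H(K)$ is abelian over $\Q$, Kronecker--Weber forces $H(K)\subseteq \Q(\zeta_f)$ for $f=\lcm(16,\operatorname{cond}(K))$, so the tower $\Q\subset K\subset H(K)\subseteq \Q(\zeta_f)$ is well defined and all the Galois groups appearing in the statement make sense.

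First I would fix an element $\sigma\in G$ outside $\bigcup_\ell G_\ell\cup \Gal(\Q(\zeta_f)/H(K))$, which is exactly what the hypothesis guarantees. Chebotarev's density theorem applied to $\Q(\zeta_f)/\Q$ then produces a positive-density set of rational primes $p$ whose Frobenius lies in the conjugacy class of $\sigma$. For each such $p$ and any prime $\fP$ of $K$ above $p$, three things hold at once: the relation $\sigma\in G$ controls the splitting of $p$ in $K$; the condition $\sigma\notin \Gal(\Q(\zeta_f)/H(K))$ forces $\fP$ to be non-principal and hence, since $\Cl_K$ is cyclic of prime order, to generate the whole class group; and $\sigma\notin G_\ell$ translates through the cyclotomic character into $p$ being a primitive root modulo $\ell$ for every odd prime $\ell\mid f$, with the $\ell=4$ case encoding the analogous $2$-adic condition needed at the prime $2$.

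Next I would feed this supply of primes into a growth-lemma machine. A real quartic field has unit rank $3$, so the sequential growth lemma of Gun--Sivaraman (Theorem~14 of \cite{GuSi20}), designed for unit rank at least three, is the right tool; the Graves--Murty growth lemma would require unit rank at least four and is therefore unavailable here. Using the Chebotarev primes I would inductively construct a sequence $\{\alpha_i\}\subset K$ satisfying the growth estimate, and then Lenstra's criterion (\Cref{EI}) converts these estimates into a Euclidean function on the class of $\fP$, exhibiting it as a Euclidean ideal class.

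The main obstacle is arranging for a single Frobenius element to deliver simultaneously the class-group generation coming from avoidance of $\Gal(\Q(\zeta_f)/H(K))$ and the Artin-type primitive-root conditions coming from avoidance of the $G_\ell$; the hypothesis on $G$ is precisely engineered so these two demands are compatible. The delicate point is then verifying unconditionally, at unit rank exactly three and without GRH, that the sequential growth lemma closes — that is, that the density supplied by Chebotarev is sufficient to drive the inductive growth estimates required by Lenstra's framework.
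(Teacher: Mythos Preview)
There is nothing in the paper to compare your proposal against: Theorem~A is not proved here. The paper explicitly introduces it as a result ``recall[ed]'' from \cite{KrPa25} and uses it as a black box to deduce the existence of Euclidean ideal classes once a suitable lift $\hat\sigma$ has been exhibited (Lemmas~\ref{existence_of_lift3} and~\ref{existence_of_lift4}). So your sketch cannot be checked against a proof in this paper; at best it can be checked for internal coherence.

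On that front, one point is wrong as written. You claim that $\sigma\notin G_\ell$ ``translates through the cyclotomic character into $p$ being a primitive root modulo~$\ell$.'' It does not. Since $G_\ell=\Gal(\Q(\zeta_f)/\Q(\zeta_\ell))$ is the kernel of the restriction $\Gal(\Q(\zeta_f)/\Q)\twoheadrightarrow(\Z/\ell\Z)^\times$, the condition $\sigma\notin G_\ell$ says only that the image of $\mathrm{Frob}_p$ in $(\Z/\ell\Z)^\times$ is nontrivial, i.e.\ $p\not\equiv 1\pmod\ell$. That is far weaker than $p$ generating $(\Z/\ell\Z)^\times$. Whatever Artin-type input the growth lemma in \cite{KrPa25} actually requires, it is not obtained from the bare avoidance $\sigma\notin G_\ell$ in the way you describe, so this step of your outline would need to be reworked before it could stand as a proof.
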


\medskip

Now let $K$ be a biquadratic field whose Hilbert class field $H(K)$ is 
abelian over $\Q$, and suppose that the class group $\Cl_K$ is cyclic.
By the classification in \Cref{classicification:biquadratic}, such fields
belong to one of a small number of explicitly described families.

If $h_K=1$, then by Theorem~2 of \cite{Gra11} together with the growth
condition of \cite{Gra13}, the field $K$ necessarily admits a Euclidean
ideal.  
Thus the remaining case of interest is when $K$ is an abelian biquadratic
field of \emph{class number two}.  
Such fields must be of one of the following forms:
\begin{enumerate}
    \item $K=\Q(\sqrt{p_1p_2},\sqrt{p_2})$, except in the exceptional case
    $p_1\equiv1\pmod4$ and $p_2,p_3\equiv3\pmod4$; \label{prime21}
    \item $K=\Q(\sqrt{p_1p_2},\sqrt{p_3p_4})$, where $p_3,p_4\equiv3\pmod4$; 
    \label{prime22}
    \item $K=\Q(\sqrt{p_1p_2p_3},\sqrt{p_4})$, where $p_1,p_2\equiv3\pmod4$. 
    \label{primes31}
\end{enumerate}

To show that such a field $K$ contains a Euclidean ideal, it suffices via  Theorem A to find an element
\[
\sigma\in\Gal(H(K)/K)
\]
whose lift $\hat{\sigma}\in\Gal(\Q(\zeta_f)/K)$ avoids the exceptional union
\[
\bigcup_{\ell}G_\ell\;\cup\;\Gal(\Q(\zeta_f)/H(K)).
\]
For fields of type \cref{prime21}, such a lift is established in
\cite[Lemma~4.2]{KrPa25}.  
We now treat the remaining two cases.

\begin{lemma}\label{existence_of_lift3}
Let $p_i$ be distinct primes with $p_3,p_4\equiv3\pmod4$, and let
\[
K=\Q(\sqrt{p_1p_2},\sqrt{p_3p_4})
\]
be a biquadratic field of class number two.  
If $\Gal(H(K)/K)=\langle\sigma\rangle$, then there exists a lift
$\hat{\sigma}\in\Gal(\Q(\zeta_f)/K)$ of $\sigma$ such that
\[
\hat{\sigma}\notin 
\bigcup_{\ell} \Gal(\Q(\zeta_f)/\Q(\zeta_\ell))
\;\cup\;
\Gal(\Q(\zeta_f)/H(K)),
\]
where the union runs over all odd primes $\ell\mid f$ and $\ell=4$.
\end{lemma}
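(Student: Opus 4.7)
The plan is to construct $\hat{\sigma}$ explicitly via the Chinese Remainder decomposition of $\Gal(\Q(\zeta_f)/\Q)$. First I would pin down $H(K)$: since we are in the regime where $H(K)$ is abelian over $\Q$, \Cref{Hilbert_classfield_for_biquadratic} applies, and the class-number-two hypothesis combined with \Cref{Biquadrati:cyclicclass} forces $[H(K):K]=2$. A case analysis on $p_1, p_2 \pmod 4$ via formula~\eqref{hilbertclassfld} yields two sub-cases: if $p_1,p_2\equiv 1\pmod 4$ (Case~A), then $H(K)=\Q(\sqrt{p_1},\sqrt{p_2},\sqrt{p_3p_4})$; if $p_1,p_2\equiv 3\pmod 4$ (Case~C), then $H(K)=\Q(\sqrt{p_1p_2},\sqrt{p_1p_3},\sqrt{p_1p_4})$. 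The mixed congruence case would force $[H(K):K]=4$ by \eqref{hilbertclassfld}, contradicting $h_K=2$, and so is excluded.

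Next, the conductor of $K$ is $p_1p_2p_3p_4$, so $f=\lcm(16,\,\mathrm{cond}(K))=16\,p_1p_2p_3p_4$, and
\[
\Gal(\Q(\zeta_f)/\Q)\;\cong\; (\Z/16\Z)^\times \times \prod_{i=1}^{4} (\Z/p_i\Z)^\times.
\]
I write a general element as $n=(n^{(4)}, n^{(p_1)},\ldots,n^{(p_4)})$. Using the Kronecker--Weber embeddings $\sqrt{p_i^\ast}\in \Q(\zeta_{p_i})$ (so $\sqrt{p_3p_4}=\sqrt{-p_3}\sqrt{-p_4}$ when $p_3,p_4\equiv 3\pmod 4$, and similarly for the Case~C generators), the condition $n\in N:=\Gal(\Q(\zeta_f)/H(K))$ translates into quadratic character equations on the odd components. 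In Case~A: $\chi_{p_1}(n^{(p_1)})=\chi_{p_2}(n^{(p_2)})=1$ and $\chi_{p_3}(n^{(p_3)})=\chi_{p_4}(n^{(p_4)})$. In Case~C: $\chi_{p_1}(n^{(p_1)})=\chi_{p_2}(n^{(p_2)})=\chi_{p_3}(n^{(p_3)})=\chi_{p_4}(n^{(p_4)})$. Crucially, the $2$-component $n^{(4)}$ is unconstrained.

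Fix any lift $\sigma_0\in G\setminus N$ of $\sigma$, where $G=\Gal(\Q(\zeta_f)/K)$. The requirement $\hat{\sigma}=\sigma_0 n\notin G_\ell$ unwinds componentwise to $n^{(\ell)}\neq(\sigma_0^{(\ell)})^{-1}$. Since $n^{(4)}$ ranges freely over $(\Z/16\Z)^\times$ (order~$8$), the $\ell=4$ constraint is met by discarding a single element. For each odd prime $\ell=p_i$, I would verify that the allowed subset of $(\Z/p_i\Z)^\times$ cut out by the character equations has at least two elements, so the forbidden value $(\sigma_0^{(p_i)})^{-1}$ can be avoided; for $p_i\ge 7$ this subset has size $(p_i-1)/2\ge 3$, giving ample room. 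The avoidance at $\ell=p_i$ automatically handles the requirement $\hat{\sigma}\notin \Gal(\Q(\zeta_f)/H(K))$, since $\sigma_0$ already restricts to $\sigma\neq 1$ on $H(K)$.

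The main obstacle is the exceptional prime $p_i=3$, where $(\Z/3\Z)^\times$ has only two elements and each of the square/non-square classes is a singleton, so $n^{(3)}$ is forced once one discards $(\sigma_0^{(3)})^{-1}$. This propagates through the character constraints to fix the Legendre sign $\chi_{p_j}(n^{(p_j)})$ at the companion primes. However, since the $p_i$ are distinct, at most one of them equals $3$, and every remaining $p_j\ge 7$ still leaves at least $(p_j-1)/2-1\ge 2$ admissible values in the prescribed square class after discarding the forbidden element. Patching the local choices via CRT then delivers the required lift $\hat{\sigma}\in\Gal(\Q(\zeta_f)/K)$.
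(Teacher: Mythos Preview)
Your construction is correct and rests on the same idea as the paper's proof—build $\hat{\sigma}$ componentwise via the Chinese Remainder decomposition of $\Gal(\Q(\zeta_f)/\Q)$—though the paper argues more abstractly, splitting for each $\ell$ into the two cases $H(K)\cap\Q(\zeta_\ell)\not\subset K$ (where every lift of $\sigma$ automatically avoids $G_\ell$) and $H(K)\cap\Q(\zeta_\ell)\subset K$ (where one extends $\sigma$ to act nontrivially on $\Q(\zeta_\ell)$), rather than writing out the quadratic character conditions explicitly. Your case analysis on $p_1,p_2\pmod 4$ and the explicit bookkeeping have the advantage of making the simultaneous avoidance at all $\ell$ transparent, which the paper leaves implicit. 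One minor slip: for $\ell=4$ the forbidden set is the index-$2$ coset $\{\,n^{(2)}:\sigma_0^{(2)}n^{(2)}\equiv 1\pmod 4\,\}$ inside $(\Z/16\Z)^\times$, not a single element; but since the $2$-component of $n\in N$ is unconstrained this still leaves four admissible choices, and the argument is unaffected.
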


\begin{proof}
The nontrivial element $\sigma\in\Gal(H(K)/K)\simeq\Z/2\Z$ must act
nontrivially on $H(K)$, so any lift $\hat{\sigma}$ necessarily avoids
$\Gal(\Q(\zeta_f)/H(K))$.

Because $H(K)$ is generated by adjoining square roots of suitable products of
the primes $p_i$, and since $p_3,p_4\equiv3\pmod4$, at least one such square
root (e.g.\ $\sqrt{p_1p_3}$ or $\sqrt{p_1p_4}$) lies in 
extensions whose interaction with $\Q(\zeta_\ell)$ ensures that 
$H(K)\cap\Q(\zeta_\ell)\subsetneq H(K)$ for some $\ell$.

If $H(K)\cap\Q(\zeta_\ell)\not\subset K$, then every lift of $\sigma$ avoids
$\Gal(\Q(\zeta_f)/\Q(\zeta_\ell))$.  
If, on the other hand, $H(K)\cap\Q(\zeta_\ell)\subseteq K$, one may extend
$\sigma$ to the compositum $H(K)\Q(\zeta_\ell)$ so that it restricts to
$\sigma$ on $H(K)$ and acts nontrivially on $\Q(\zeta_\ell)$.  
In either case, we obtain a lift $\hat{\sigma}$ outside the exceptional
union.
\end{proof}

The final remaining case is structurally similar to the situation treated in 
\Cref{existence_of_lift3}.  
The same Galois-theoretic considerations apply, and the argument establishing the
existence of an appropriate lift carries over verbatim.  
For completeness we record the statement below, but omit the proof.
\begin{lemma}\label{existence_of_lift4}
Let $p_i$ be distinct primes with $p_1,p_2\equiv3\pmod4$, and let
\[
K=\Q(\sqrt{p_1p_2p_3},\sqrt{p_4})
\]
be a biquadratic field of class number two.  
If $\Gal(H(K)/K)=\langle\sigma\rangle$, then there exists a lift
$\hat{\sigma}\in\Gal(\Q(\zeta_f)/K)$ of $\sigma$ satisfying the same
avoidance property as in \Cref{existence_of_lift3}.
\end{lemma}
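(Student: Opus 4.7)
The plan is to transcribe the strategy of \Cref{existence_of_lift3} to the present setting, with the roles of $(p_3,p_4)$ and $(p_1,p_2)$ interchanged. First I would use \Cref{Hilbert_classfield_for_biquadratic} to describe $H(K)$ explicitly. Since $h_K=2$ and $H(K)/\Q$ is abelian, the hypothesis $p_1,p_2\equiv 3\pmod 4$ combined with the formula in equation \ref{hilbertclassfld} forces $p_3\equiv p_4\equiv 1\pmod 4$ (otherwise $[H(K):\Q]=16$, contradicting $h_K=2$). In this case $H(K)=\Q(\sqrt{p_3},\sqrt{p_1p_2},\sqrt{p_4})$ is multiquadratic of degree $8$ over $\Q$, with $K$ embedded as an index-$2$ subfield via $\sqrt{p_1p_2p_3}=\sqrt{p_3}\cdot\sqrt{p_1p_2}$. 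The generator $\sigma$ of $\Gal(H(K)/K)\simeq\Z/2\Z$ then flips the pair $(\sqrt{p_3},\sqrt{p_1p_2})$ simultaneously while fixing $\sqrt{p_4}$.

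With the field structure pinned down, the lift argument proceeds as in the preceding lemma. Any lift $\hat\sigma\in\Gal(\Q(\zeta_f)/K)$ of $\sigma$ lies outside $\Gal(\Q(\zeta_f)/H(K))$ automatically, because $\sigma$ acts nontrivially on $H(K)$. For each odd prime $\ell\mid f$ and for $\ell=4$, one then computes the intersection $H(K)\cap\Q(\zeta_\ell)$, which is at most a quadratic subfield of $H(K)$. Since $p_1,p_2\equiv 3\pmod 4$, the unique quadratic subfield of $\Q(\zeta_{p_i})$ is $\Q(\sqrt{-p_i})$, an imaginary field that cannot embed into the totally real $H(K)$; hence $H(K)\cap\Q(\zeta_{p_1})=H(K)\cap\Q(\zeta_{p_2})=\Q$, and the primes $p_1,p_2$ impose no obstruction on the lift.

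The remaining primes are $\ell\in\{p_3,p_4,4\}$, and for each I would apply the same dichotomy used in \Cref{existence_of_lift3}. If $H(K)\cap\Q(\zeta_\ell)\not\subseteq K$, then $\sigma$ already acts nontrivially on this intersection, so every lift automatically avoids $\Gal(\Q(\zeta_f)/\Q(\zeta_\ell))$. If instead $H(K)\cap\Q(\zeta_\ell)\subseteq K$, then $H(K)$ and $\Q(\zeta_\ell)$ are linearly disjoint over their intersection, so $\sigma$ extends to the compositum $H(K)\cdot\Q(\zeta_\ell)$ with a nontrivial restriction to $\Q(\zeta_\ell)$. Assembling these compatible local choices through the product decomposition $\Gal(\Q(\zeta_f)/\Q)\simeq\prod_\ell\Gal(\Q(\zeta_{\ell^{a_\ell}})/\Q)$ yields the required global lift $\hat\sigma$ outside the exceptional union.

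The main obstacle is the verification $H(K)\cap\Q(\zeta_{p_i})=\Q$ for $i=1,2$, which is precisely where the hypothesis $p_1,p_2\equiv 3\pmod 4$ enters. Once this collapse is established, the remaining intersection analysis at $\ell\in\{p_3,p_4,4\}$ and the Chinese Remainder assembly are formal consequences of multiquadratic Galois theory, exactly paralleling the argument in \Cref{existence_of_lift3}; this justifies the author's assertion that the proof carries over verbatim.
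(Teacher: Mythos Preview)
Your proposal is correct and follows exactly the approach the paper intends: the paper's own proof is literally the single line ``Proof as same as above lemma,'' and you have simply carried out in detail the adaptation of \Cref{existence_of_lift3} to the case $K=\Q(\sqrt{p_1p_2p_3},\sqrt{p_4})$. Your additional deduction that $p_3\equiv p_4\equiv1\pmod4$ (from $h_K=2$ together with $H(K)/\Q$ abelian via \Cref{Hilbert_classfield_for_biquadratic}) and your explicit case analysis at each $\ell$ are more than the paper provides, but entirely consistent with its intended argument.
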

\begin{proof}
    Proof as same as  above lemma. 
\end{proof}

\begin{theorem}
Let $K=\Q(\sqrt{d_1},\sqrt{d_2})$ be a biquadratic field with 
$2\nmid d_1d_2$, and suppose that its Hilbert class field $H(K)$ is abelian
over $\Q$.  
Then $\Cl_K$ is cyclic if and only if $K$ contains a Euclidean ideal class.
\end{theorem}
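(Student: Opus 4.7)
The plan is to prove the two implications of the biconditional separately, using Theorem~\ref{Biquadrati:cyclicclass} and Proposition~\ref{structure:biquadratic} to reduce the cyclic direction to a small, explicit list of families treated one at a time via Theorem~A.

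For the easy direction (Euclidean $\Rightarrow$ cyclic) I would invoke Lenstra's foundational observation in \cite{Len79}: if $[C]$ is a Euclidean ideal class of $K$, then $[C]$ generates $\Cl_K$, so $\Cl_K$ is cyclic. This is immediate from Definition~\ref{EI} and requires no biquadratic-specific input.

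For the converse, assume $\Cl_K$ is cyclic. Theorem~\ref{Biquadrati:cyclicclass} then forces $h_K\in\{1,2\}$, so I would split into two cases. When $h_K=1$, the ring $\mathcal{O}_K$ itself is the Euclidean candidate: since $K$ is totally real biquadratic (hence of unit rank three), Theorem~2 of \cite{Gra11} combined with the growth lemma of \cite{Gra13} provides a Euclidean algorithm for $\mathcal{O}_K$. When $h_K=2$, Proposition~\ref{structure:biquadratic} together with Corollary~\ref{cyclic:omega_result} confines $K$ to one of the three families \cref{prime21}--\cref{primes31}. For each family, Theorem~A reduces the existence of a Euclidean ideal class to producing a lift $\hat\sigma\in\Gal(\Q(\zeta_f)/K)$ of the nontrivial element of $\Gal(H(K)/K)$ that avoids the exceptional union $\bigcup_\ell G_\ell\cup\Gal(\Q(\zeta_f)/H(K))$. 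Lemma~4.2 of \cite{KrPa25} supplies this lift for family \cref{prime21}, while \Cref{existence_of_lift3,existence_of_lift4} supply it for families \cref{prime22} and \cref{primes31}.

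The main obstacle is verifying that this four-way case list genuinely exhausts the possibilities once the hypotheses $2\nmid d_1d_2$, $H(K)/\Q$ abelian, and $\Cl_K$ cyclic are imposed. In particular, the nominally separate form $K=\Q(\sqrt{p_1},\sqrt{p_2})$ from \Cref{structure:biquadratic} does not appear with $h_K=2$ under these hypotheses: the formula in Corollary~\ref{Hilbert_classfield_for_biquadratic} forces $L=K$ in that configuration, hence $H(K)=K$ and $h_K=1$, so it is already absorbed into the $h_K=1$ case. Once this census check is complete, assembling the $h_K=1$ argument with the three lift-based arguments for $h_K=2$ yields the theorem.
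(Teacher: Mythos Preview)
Your proposal is correct and follows essentially the same route as the paper: Lenstra \cite{Len79} for the reverse implication, Theorem~\ref{Biquadrati:cyclicclass} to reduce to $h_K\le 2$, the Graves growth-lemma argument for $h_K=1$, and for $h_K=2$ the three families~\cref{prime21}--\cref{primes31} handled via Theorem~A together with \cite[Lemma~4.2]{KrPa25} and \Cref{existence_of_lift3,existence_of_lift4}. Your explicit check that the two-prime form $\Q(\sqrt{p_1},\sqrt{p_2})$ forces $H(K)=K$ (hence $h_K=1$) is a nice clarification that the paper leaves implicit.
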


\begin{proof}
Assume first that $\Cl_K$ is cyclic.  
Then by \Cref{Biquadrati:cyclicclass}, $h_K\le2$.  
If $h_K=1$, we are done by \cite{Gra11,Gra13}.  
If $h_K=2$, then $K$ must belong to one of the three families listed above,
and in each case \Cref{existence_of_lift3,existence_of_lift4} (together with 
\cite{KrPa25}) produces a lift $\hat{\sigma}$ satisfying the hypothesis of 
\Cref{thm:Quatric}.  
Hence, $K$ possesses a Euclidean ideal. Conversely, if $K$ contains a Euclidean ideal class, then 
\cite[Theorem~1.6]{Len79} implies that $\Cl_K$ is cyclic.
\end{proof}

\section{Density of Biquadratic Fields with Euclidean Ideals}

Before addressing the density problem, we record the following useful observation.

\begin{prop}
Let $K=\Q(\sqrt{d_1}, \sqrt{d_2})$ be a biquadratic number field such that $2\nmid d_1 d_2$.
If $K$ possesses a Euclidean ideal, then $\omega(\cD_K)\le4$.
Equivalently, the genus number of $K$ satisfies $g_K\le 2^2$.
\end{prop}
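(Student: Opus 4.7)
The plan is to chain together two results already in place: Lenstra's characterization of Euclidean ideal classes and the genus-theoretic bound on $\omega(\cD_K)$ for cyclic class groups. The whole statement is a one-line consequence once these are combined, so the proposal is essentially a matter of organization rather than a new argument.

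First I would invoke \cite[Theorem~1.6]{Len79}, which states that if a Dedekind domain admits a Euclidean ideal class, then its class group is cyclic. Applied to $\cO_K$, the hypothesis that $K$ possesses a Euclidean ideal immediately yields that $\Cl_K$ is cyclic. Next I would apply \Cref{cyclic:omega_result}, which was proved earlier via the inclusion $(\Z/2\Z)^{\omega(\Delta_K)-3}\subseteq\Cl_K$ coming from \Cref{part_Hilbert_classfield_for_biquadratic}: cyclicity of $\Cl_K$ forbids a subgroup of the form $(\Z/2\Z)^r$ for $r\ge 2$, forcing $\omega(\Delta_K)-3\le 1$, i.e.\ $\omega(\cD_K)\le 4$. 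This establishes the first assertion.

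For the genus-number reformulation, I would invoke the corollary to \Cref{genustheory_for_biquadratic}, which records that for $K$ with $2\nmid d_1d_2$ the genus number equals $g_K = 2^{\omega(\cD_K)-2}$. Substituting the bound $\omega(\cD_K)\le 4$ gives $g_K\le 2^{2}=4$, which is the claimed equivalent formulation.

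There is no real obstacle here: the proposition is a direct corollary of results already assembled in the paper. The only care needed is to note that the discriminant $\cD_K$ and the quantity $\omega(\cD_K)$ are the same whether one counts rational primes dividing $d_1d_2$ (under the hypothesis $2\nmid d_1d_2$) or the rational primes ramifying in $K$, so that the genus-theoretic input and Lenstra's class-group consequence refer to the same integer $\omega(\cD_K)$.
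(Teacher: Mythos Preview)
Your proposal is correct and follows exactly the same approach as the paper: invoke Lenstra's theorem to obtain cyclicity of $\Cl_K$, apply \Cref{cyclic:omega_result} to deduce $\omega(\cD_K)\le4$, and then use the formula $g_K=2^{\omega(\cD_K)-2}$ to get the genus-number bound.
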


\begin{proof}
Assume that $K$ admits a Euclidean ideal.  
By \cite[Theorem~1.6]{Len79}, any number field possessing a Euclidean ideal has a cyclic class group, i.e.\ $\Cl_K$ is cyclic.  
Invoking \Cref{cyclic:omega_result}, we then obtain that $\omega(\cD_K)\le4$ for such a field $K$. 
In particular, the number of distinct prime discriminant divisors of $K$ is at most four.
Since the genus number satisfies $g_K = 2^{\omega(\cD_K)-2}$, it follows that $g_K \le 2^{4-2} = 2^2 $. Hence, every biquadratic field with a Euclidean ideal has genus number at most $2^2$, as required.
\end{proof}

\noindent
We now turn to the task of counting biquadratic fields according to their
genus numbers. Our aim is to describe the distribution of genus numbers
within this family and to obtain asymptotic formulas for the number of
biquadratic fields whose genus number is prescribed or lies within a specified range.
Recent years have witnessed substantial progress on genus statistics for other
families of number fields. For instance, McGown and Tucker~\cite{McTu23}
established genus distribution results for cubic fields, and together with
Throne~\cite{MTT23} they extended these methods to the quintic setting.
Dixit and Pasupulati~\cite{DiPa26} have recently obtained analogous counts
for $S_3\times C_q$ and $D_4$ extensions. In the abelian case, Frei, Loughran,
and Newton~\cite{FLR23} counted fields ordered by conductor and proved that
the density of abelian fields with a prescribed Galois group and genus number
is zero.

Passing from  counts by conductor to discriminant, however, is delicate and only
straightforward in prime-degree extensions. In contrast, our treatment of
biquadratic fields proceed more directly: We make a slight modification to clever 
parameterization due to Rome\cite{Rom18} and precise counting arguments to obtain the desired genus
statistics.

\subsection{Parametrization of Biquadratic Fields}
 To begin, we introduce a
uniform parametrization of the family of biquadratic fields that will be used
throughout the counting arguments.
Let $ K = \mathbb{Q}\left(\sqrt{a}, \sqrt{b}\right) $. 
Writing $ a = m a_1 $, $ b = m b_1 $ with pairwise coprime, squarefree 
$ m, a_1, b_1 $, we have
\begin{align*}
	K = \mathbb{Q}\left(\sqrt{m a_1}, \sqrt{m b_1}\right),
\end{align*}
whose quadratic subfields are
\[
\mathbb{Q}\left(\sqrt{m a_1}\right),\quad \mathbb{Q}\left(\sqrt{m b_1}\right),\quad
\mathbb{Q}\left(\sqrt{a_1 b_1}\right).
\]
Thus, $ K $ is uniquely determined (up to permutation) by the triple 
$\left(m_1, m_2, m_3\right) = \left(m, a_1, b_1\right)$. The discriminant takes the form
\[
\Delta_K = c^2 m_1^2 m_2^2 m_3^2,
\]
with $ c \in \{1,4,8\} $ depending only on $ m_i \bmod 4 $. 
If all $ m_i $ are odd, then $ c=8 $ never occurs, and
\[
c = 
\begin{cases}
	1, & m_1 \equiv m_2 \equiv m_3 \pmod{4},\\
	4, & \text{otherwise.}
\end{cases}
\]
\subsection{Counting biquadratic fields in $\cS$}

For $X>0$ put
\[
\cS(X)=\# \{K\in\cS:\;|\Delta_K|\le X\}.
\]

\begin{theorem}
	
	As $X\to\infty$,
	\[
	\cS(X)=\frac{7}{1920}\,\sqrt{X}\,\log^2 X\;\prod_{p}\Big(1-\frac{1}{p}\Big)^3\Big(1+\frac{3}{p}\Big)
	+O(\sqrt{X}\log X).
	\]
\end{theorem}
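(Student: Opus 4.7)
The plan is to reduce the count to one of ordered triples $(m_1,m_2,m_3)$ of pairwise coprime squarefree odd positive integers, estimate this via Selberg--Delange, and then split by residue classes mod $4$ to handle the factor $c\in\{1,4\}$ in the discriminant.

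Put $N:=\sqrt X$ and let $T(N)$ denote the number of such ordered triples with $m_1m_2m_3\le N$. Its Dirichlet series factors as
\[
\sum_{(m_1,m_2,m_3)}\frac{1}{(m_1m_2m_3)^s}=\prod_{p\ \text{odd}}\Bigl(1+\tfrac{3}{p^s}\Bigr)=\frac{\zeta(s)^3\,G(s)}{1+3\cdot 2^{-s}},
\]
with $G(s):=\prod_p(1+3/p^s)(1-1/p^s)^3$ holomorphic and nonvanishing for $\mathrm{Re}(s)>\tfrac12$ and $G(1)=C_0:=\prod_p(1-1/p)^3(1+3/p)$. Selberg--Delange with $\kappa=3$ yields
\[
T(N)=\tfrac{C_0}{5}\,N\log^2 N+O(N\log N),
\]
and the restriction ``at most one $m_i=1$'' (needed for a genuinely biquadratic field) discards only $O(N)$ triples.

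Next, let $\chi$ be the nontrivial character mod $4$. Then $c=1$ iff $\chi(m_1)=\chi(m_2)=\chi(m_3)$, and $c=4$ otherwise. By orthogonality,
\[
\mathbf 1[\chi(m_1)=\chi(m_2)=\chi(m_3)]=\tfrac14\bigl(1+\chi(m_1m_2)+\chi(m_2m_3)+\chi(m_1m_3)\bigr),
\]
and denoting the counts with $c=1$ and $c=4$ by $A(N)$ and $B(N)=T(N)-A(N)$, symmetry gives $A(N)=\tfrac14 T(N)+\tfrac34 T^{\chi}(N)$, where $T^{\chi}(N):=\sum\chi(m_1)\chi(m_2)$. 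The twisted series factors as
\[
\prod_{p\ \text{odd}}\Bigl(1+\tfrac{1+2\chi(p)}{p^s}\Bigr)=\zeta(s)\,L(s,\chi)^2\,H(s),
\]
with $H$ holomorphic for $\mathrm{Re}(s)>\tfrac12$; using $L(1,\chi)=\pi/4\ne 0$, this has only a simple pole at $s=1$ (versus a triple pole for $T$), so Selberg--Delange with $\kappa=1$ gives $T^{\chi}(N)=O(N)$, whence $A(N)=\tfrac14T(N)+O(N)$ and $B(N)=\tfrac34T(N)+O(N)$.

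Finally, the discriminant bound $c\cdot m_1m_2m_3\le\sqrt X$ splits as $\cS(X)=\tfrac16\bigl[A(\sqrt X)+B(\sqrt X/4)\bigr]$, where the factor $1/6$ corrects for the ordering (all admissible triples have three distinct entries). Substituting the asymptotics, using $\log^2(N/4)=\log^2 N+O(\log N)$, and combining $\tfrac{C_0}{5}+\tfrac{3C_0}{20}=\tfrac{7C_0}{20}$, we obtain
\[
\cS(X)=\frac{T(N)+3T(N/4)}{24}+O(N)=\frac{7C_0}{480}\,N\log^2 N+O(N\log N),
\]
which upon $N=\sqrt X$ and $\log^2\sqrt X=\tfrac14\log^2 X$ gives precisely the claimed formula. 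The chief technical point is pushing the Selberg--Delange error through the $\chi$-twisted Dirichlet series and verifying that the coprimality and degeneracy constraints introduce only $O(\sqrt X)$ corrections; once the Euler products $\zeta^3 G$ and $\zeta L(\cdot,\chi)^2 H$ are identified, the remainder is bookkeeping.
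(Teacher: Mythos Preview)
Your proof is correct and follows essentially the same approach as the paper: both reduce to counting ordered triples $(m_1,m_2,m_3)$ of pairwise coprime odd squarefree integers, apply Selberg--Delange to the Euler product $\prod_{p\ \text{odd}}(1+3/p^s)=\zeta(s)^3\cdot(\text{holomorphic})$, and separate the two discriminant constants $c\in\{1,4\}$ by means of the nontrivial character modulo~$4$. The only cosmetic difference is that you split directly into the cases $c=1$ and $c=4$ via the indicator $\tfrac14(1+\chi(m_1m_2)+\chi(m_2m_3)+\chi(m_1m_3))$, while the paper first breaks into all eight residue patterns $\epsilon\in\{\pm1\}^3$ and then recombines; your packaging is slightly more streamlined, and your computation of the leading constant lands exactly on the stated $\tfrac{7}{1920}\prod_p(1-1/p)^3(1+3/p)$.
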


\begin{proof}
	We count fields $K\in\cS$ using the parametrization above. 
	The condition $\Delta_K \le X$ is equivalent to
	\[
	m_1 m_2 m_3 \le \frac{\sqrt{X}}{c},
	\]
	where $c$ depends only on the residue classes of the $m_i$ modulo $4$. 
	Let $\epsilon=(\epsilon_1,\epsilon_2,\epsilon_3)\in\{\pm1\}^3$ denote these residues, 
	and define
	\[
	T(\epsilon;N)
	= \#\{(m_1,m_2,m_3): m_i\ \text{odd, squarefree, coprime},\
	m_i\equiv\epsilon_i\pmod4,\
	m_1m_2m_3\le N\},
	\]
	where $N_\epsilon=\sqrt{X}/c_\epsilon$ and $c_\epsilon\in\{1,4\}$.
		In this definition, each $m_i$ is restricted to a fixed congruence class modulo $4$. 
	To handle these restrictions analytically, we use the non-principal Dirichlet character modulo $4$, defined by
	
	\[
	\chi(n)=
	\begin{cases}
		0,&2\mid n,\\
		1,&n\equiv1\pmod4,\\
		-1,&n\equiv3\pmod4.
	\end{cases}
	\]
	Then
	\[
	1_{m_i\equiv\epsilon_i\pmod4}
	=\frac{1}{2}\sum_{\nu\in\{1,2\}}\chi^\nu(m_i\epsilon_i).
	\]
	Applying this to all three $m_i$ gives
\begin{align*}
		T(\epsilon;N)&
		= \sum_{\substack{m_1m_2m_3 \le N \\ (m_1,m_2,m_3)=1}}
		\mu^2(m_1m_2m_3)
		\prod_{i=1}^{3} 1_{\,m_i \equiv \epsilon_i \pmod{4}}.\\
&	= \frac{1}{8}\!\!\!\sum_{\nu_1,\nu_2,\nu_3\in\{1,2\}}
	\sum_{\substack{m_1m_2m_3\le N \\ (m_1,m_2,m_3)=1}}
	\mu^2(m_1m_2m_3)\,
	\chi^{\nu_1}(\epsilon_1 m_1)\chi^{\nu_2}(\epsilon_2 m_2)\chi^{ \nu_3}(\epsilon_3 m_3).\\
	&= \frac{1}{8}\sum_{\nu_1,\nu_2,\nu_3\in\{1,2\}}
	C(\nu,\epsilon)\;
	S_{\nu_1,\nu_2,\nu_3}(N),
\end{align*}
where
\[
C(\nu,\epsilon):=\prod_{i=1}^3\chi^{\nu_i}(\epsilon_i)\in\{\pm1\},
\qquad
S_{\nu_1,\nu_2,\nu_3}(N)
:=\sum_{\substack{m_1m_2m_3\le N\$m_1,m_2,m_3)=1}}
\mu^2(m_1m_2m_3)\prod_{i=1}^3\chi^{\nu_i}(m_i).
\]
	where the factor $\mu^2$ enforces squarefreeness.  This representation makes the
sums multiplicative and prepares them for analysis via the Dirichlet series and the
Selberg--Delange method.

	\noindent	After expressing the congruence conditions in terms of characters,
the counting problem reduces to estimating sums of the form
\[
S(N;\nu_1,\nu_2,\nu_3)
= \sum_{\substack{m_1m_2m_3\le N \\ (m_1,m_2,m_3)=1}}
\mu^2(m_1m_2m_3)\,
\chi^{\nu_1}(m_1)\chi^{\nu_2}(m_2)\chi^{\nu_3}(m_3),
\]
where each $\nu_i\in\{1,2\}$ and $\chi$ is the non-principal character modulo $4$.

\noindent
Define
\[
a_n = \mu^2(n)\!\!\sum_{m_1 m_2 m_3 = n}
\chi^{\nu_1}(m_1)\chi^{\nu_2}(m_2)\chi^{\nu_3}(m_3), \qquad
S(N;\nu_1,\nu_2,\nu_3)=\sum_{n\le N}a_n.
\]
Its Dirichlet series
\[
F(s)=\sum_{n=1}^\infty\frac{a_n}{n^s},\qquad \Re(s)>1,
\]
is multiplicative and satisfies
\[
F(s)=\prod_{p}\Big(1+\frac{\chi^{\nu_1}(p)+\chi^{\nu_2}(p)+\chi^{\nu_3}(p)}{p^s}\Big)
=G(s)\prod_{i=1}^3L(\chi^{\nu_i},s),
\]
where
\[
G(s)=\prod_{p}\Bigg(1+\frac{\chi^{\nu_1}(p)+\chi^{\nu_2}(p)+\chi^{\nu_3}(p)}{p^s}\Bigg)
\prod_{i=1}^3\Big(1-\frac{\chi^{\nu_i}(p)}{p^s}\Big),
\]
which converges absolutely for $\Re(s)>\frac{1}{2}$.

If all $\nu_i=2$, then the Dirichlet series $F(s)$ contains the factor
$\zeta(s)^3$ and therefore has a triple pole at $s=1$; for any other choice
of $(\nu_1,\nu_2,\nu_3)$ the pole order is at most~$2$, leading only to
$O(N\log N)$ contributions.  In the principal case $\nu_1=\nu_2=\nu_3=2$,
we may write
\[
F(s)=G(s)\,\zeta(s)^3,
\]
where $G(s)$ is holomorphic and nonvanishing in a neighborhood of $s=1$.
Hence, the Selberg--Delange theorem
(\cite[Theorem~II.5.3]{Ten15book}) applies and gives
\[
\sum_{n\le N}a_n
= \frac{G(1)}{\Gamma(3)}\,N(\log N)^2 + O(N\log N)
= \frac{1}{2}G(1)\,N(\log N)^2 + O(N\log N).
\]
A direct evaluation of the Euler product for $G(1)$ shows that
\[
G(1)
= 
\prod_{p\neq 2}\Bigl(1-\frac{1}{p}\Bigr)^{3}
\Bigl(1+\frac{3}{p}\Bigr).
\]

For each residue pattern $\epsilon$,
\begin{align*}
T(\epsilon;N) 	&= \frac{1}{8}\sum_{\nu_1,\nu_2,\nu_3\in\{1,2\}}
C(\nu,\epsilon)\;
S_{\nu_1,\nu_2,\nu_3}(N),\\
&= A\,N(\log N)^2 + O(N\log N),
\end{align*}
%\qquad
%c_0=\frac{1}{5}\prod_{p}\Big(1-\frac{1}{p}\Big)^3\Big(1+\frac{3}{p}\Big).
Where,
\[
A=\frac{G(1)}{16}
=\frac{1}{16}\prod_{p\neq 2}\Big(1-\frac1p\Big)^3\Big(1+\frac3p\Big),
\]
Summing over all $\epsilon$: two patterns have $c_\epsilon=1$, six have
$c_\epsilon=4$, hence
\[
\sum_{\epsilon} T(\epsilon;N_\epsilon)
=2A\sqrt{X}\log^2(\sqrt{X})
+6A\frac{\sqrt{X}}{4}\log^2\!\Big(\frac{\sqrt{X}}{4}\Big)
+O(\sqrt{X}\log X).
\]
Since $\log^2(\sqrt{X}/c)=\tfrac14(\log X)^2+O(\log X)$,
the coefficient of $\sqrt{X}(\log X)^2$ equals $A\cdot\tfrac14(2+\tfrac64)=A\tfrac78$.
Dividing by $6$ for the permutations of $(m_1,m_2,m_3)$ gives
\[
\cS(X)
=\frac{7}{48} A \sqrt{X}(\log X)^2+O(\sqrt{X}\log X),
\]
and substituting $A$ yields
\[
\cS(X)
=\frac{7}{768}\sqrt{X}(\log X)^2
\prod_{p\neq 2}\Big(1-\frac1p\Big)^3\Big(1+\frac3p\Big)
+O(\sqrt{X}\log X).
\]

\end{proof}

\subsection{Counting biquadratic fields with a fixed genus number}
We count biquadratic fields $K=\mathbb{Q}(\sqrt a,\sqrt b)$ with $a,b$ odd squarefree, discriminant $\Delta_K\le X$, and with $\Delta_K$ having exactly $n$ distinct prime factors.

\begin{theorem}\label{thm:main}
	Fix an integer $n\ge1$. Let
	\[
	\cS_{n}(X)=\#\{K\in\cS:\  g_K=2^{n},\ |\cD_K|\le X\}.
	\]
	Then, as $X\to\infty$,
	\[
	\cS_{n}(X)=O \left(  \frac{\sqrt{X}}{\log X}\,(\log\log X)^{\,n-1} \right),
	\]
	where the implied constant may depend on $n$.
\end{theorem}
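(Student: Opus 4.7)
The plan is to combine the parametrization of biquadratic fields from the preceding subsection with the Sathe--Selberg asymptotic (\Cref{lem:ss}), which counts squarefree integers with a prescribed number of prime divisors. The genus formula $g_K = 2^{\omega(\cD_K) - 2}$, recorded in the corollary to \Cref{genustheory_for_biquadratic}, immediately translates the condition $g_K = 2^n$ into the single constraint $\omega(\cD_K) = n+2$. Writing $\cD_K = c^2(m_1 m_2 m_3)^2$ with $m_i$ pairwise coprime, squarefree, and odd, and $c\in\{1,4\}$, this becomes $\omega(m_1 m_2 m_3) = n+2$ when $c=1$ (i.e.\ $m_1\equiv m_2\equiv m_3\pmod 4$) and $\omega(m_1 m_2 m_3) = n+1$ when $c=4$ (the prime $2$ is then contributed by the conductor). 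Simultaneously, the bound $|\cD_K|\le X$ becomes $m_1 m_2 m_3 \le \sqrt{X}/c$.

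Fixing an integer $k\in\{n+1,n+2\}$ and writing $M=\sqrt{X}/c$, I would count ordered pairwise coprime squarefree odd triples with product $N\le M$ and $\omega(N)=k$. Each such $N$ admits exactly $3^{k}$ ordered factorizations into three pairwise coprime parts, since each of the $k$ prime divisors of $N$ is independently assigned to one of the three slots. By \Cref{lem:ss}, adapted trivially to odd $N$, the number of odd squarefree $N\le M$ with $\omega(N)=k$ is $O_k\!\bigl(M(\log\log M)^{k-1}/\log M\bigr)$. Dividing by $3!=6$ (the number of permutations of the triple producing the same biquadratic field), using $\log M\sim\tfrac12\log X$ and $\log\log M=\log\log X+O(1)$, and summing the contributions from both residue patterns produces an upper bound of the claimed shape.

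The main technical step is the bookkeeping of the congruence conditions modulo $4$ on the $m_i$ that distinguish $c=1$ from $c=4$. I would handle this exactly as in the proof of the main counting theorem of the previous subsection: the residue classes are detected by the non-principal character $\chi\pmod 4$, whereupon the count decomposes into a finite number of character-weighted sums, each of which admits a Sathe--Selberg-type asymptotic (equivalently, a Selberg--Delange estimate) with leading order differing from the principal case only up to absolute constants. I do not foresee any substantial obstacle beyond this combinatorial bookkeeping: the essential analytic ingredient is already packaged as \Cref{lem:ss}, and all $n$-dependent constants (the factors $3^k/(k-1)!$ and the finite count of residue patterns) are absorbed into the implied constant of the final bound.
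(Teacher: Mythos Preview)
Your proposal is correct and follows essentially the same route as the paper: translate $g_K=2^n$ into a condition on $\omega(\Delta_K)$ via the genus formula, use the $(m_1,m_2,m_3)$ parametrization so that the discriminant bound becomes $m_1m_2m_3\le\sqrt{X}/c$, count ordered coprime factorizations of each squarefree $M$ with $\omega(M)=k$ by the factor $3^k$, and then apply \Cref{lem:ss}. Your handling of the $c=1$ versus $c=4$ split (distinguishing $\omega(M)=n+2$ from $\omega(M)=n+1$ according to whether $2\mid\Delta_K$) is in fact more careful than the paper's, which simply remarks that the shift is ``harmless'' and proceeds; and your plan to detect the residue classes by characters is unnecessary for an upper bound---the paper just takes a crude union over the eight residue patterns $\epsilon\in\{\pm1\}^3$---but it does no harm.
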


\begin{proof}
	Recall that for any $K\in\cS$ the genus number satisfies 
	\[
	g_K = 2^{\omega(\cD_K)-2}.
	\]
	Thus, the condition $g_K=2^n$ is equivalent to $\omega(\cD_K)=n+2$. We count such fields using the standard parametrization
	\[
	K = \Q(\sqrt{m_1m_2},\,\sqrt{m_1m_3}),
	\]
	where $m_1,m_2,m_3$ are pairwise coprime squarefree integers.  
	The discriminant of $K$ has the form
	\[
	\Delta_K = c^2\,(m_1m_2m_3)^2,
	\]
	with $c\in\{1,4\}$, and hence the condition $|\Delta_K|\le X$ is equivalent to
	\[
	M := m_1m_2m_3 \le N_\epsilon := \frac{\sqrt{X}}{c_\epsilon},
	\]
	where $c_\epsilon\in\{1,4\}$ depends on the residue pattern 
	$\epsilon=(\epsilon_1,\epsilon_2,\epsilon_3)\in\{\pm1\}^3$
	with $m_i\equiv\epsilon_i\pmod{4}$.  
	Since the odd prime divisors of $\Delta_K$ coincide with those of $M$,  
	the condition $\omega(\Delta_K)=n$ translates to $\omega(M)=n$
	(up to a harmless shift in certain congruence patterns).
	
	We now estimate the number of admissible ordered triples $(m_1,m_2,m_3)$.
	Each biquadratic field corresponds to at most $3!$ such ordered triples, arising from
	permutations of the three quadratic subfields, so it suffices to bound the total number of ordered triples satisfying these conditions.
	Fix a residue pattern $\epsilon\in\{\pm1\}^3$.  
	We consider ordered triples of odd, pairwise coprime, squarefree integers $m_1,m_2,m_3$
	such that
	\[
	m_i\equiv\epsilon_i\pmod{4}, \qquad M=m_1m_2m_3\le N_\epsilon, \qquad \omega(M)=n.
	\]
	For each squarefree $M$ with $\omega(M)=n$, the number of ordered coprime factorizations
	$M=m_1m_2m_3$ is at most $3^n$, since each of the $n$ distinct prime divisors of $M$
	may be independently assigned to one of the three factors.  
	Hence
	\begin{align*}
			\#\left\{(m_1,m_2,m_3) \big\vert \ m_i\equiv \epsilon_i\pmod{4},  \mu^2(m_1m_2m_3)=1,\omega(m_1m_2m_3)=n\right\}\\
		\le 3^n\cdot \#\{M\le N_\epsilon:\ \mu^2(M)=1,\ \omega(M)=n\}.
	\end{align*}

    \noindent
	There are eight residue patterns $\epsilon\in\{\pm1\}^3$, each corresponding to
	$N_\epsilon=\sqrt X/c_\epsilon$ with $c_\epsilon\in\{1,4\}$.  
	Therefore,
	\[
	\#\left\{(m_1,m_2,m_3) \big\vert \ \mu^2(m_1m_2m_3)=1,\omega(m_1m_2m_3)=n\right\}
	\le 8\cdot 3^n\max_{\epsilon}\#\{M\le N_\epsilon:\ \mu^2(M)=1,\ \omega(M)=n\}.
	\]
	Since $N_\epsilon\le\sqrt X$, it suffices to estimate
	$\#\{M\le N:\mu^2(M)=1,\omega(M)=n\}$ for $N\asymp\sqrt X$. By the Sathe--Selberg theorem (\Cref{lem:ss}), for fixed $n\ge1$ and $N\to\infty$,
	\[
	\#\{m\le N:\mu^2(m)=1,\omega(m)=n\}
	\ll_n \frac{N}{\log N}(\log\log N)^{n-1}.
	\]
	Applying this with $N=\sqrt X$ gives
	\[
	\#\{M\le\sqrt X:\mu^2(M)=1,\omega(M)=n\}
	\ll_n \frac{\sqrt X}{\log\sqrt X}(\log\log\sqrt X)^{n-1}.
	\]
	Combining these estimates yields
\begin{align*}
	\#\left\{(m_1,m_2,m_3) \big\vert \,  \mu^2(m_1m_2m_3)=1,\omega(m_1m_2m_3)=n\right\}
\ll_n 3^n\frac{\sqrt X}{\log\sqrt X}(\log\log\sqrt X)^{n-1}.
\end{align*}
	Since each field corresponds to at most $6$ such triples, and all constant factors
	arising from the congruence expansions (for instance, the factor $2^{-3}$) are fixed,
	we obtain
\begin{align*}
		\cS_n(X)&\ll \#\left\{(m_1,m_2,m_3) \big\vert \,  \mu^2(m_1m_2m_3)=1,\omega(m_1m_2m_3)=n+2\right\} \\
	&\ll  n \frac{\sqrt X}{\log\sqrt X}(\log\log\sqrt X)^{n+1}.
\end{align*}
	Finally, using $\log\sqrt X=\tfrac12\log X$ and
	$\log\log\sqrt X=\log\log X + O(1)$, we arrive at the stated form
	\[
		\cS_n(X)\ll_n \frac{\sqrt X}{\log X}(\log\log X)^{n+11}.
	\]
	This completes the proof.
\end{proof}

\begin{cor}
	The set $\{K\in\cS:\  g_K=2^{n}\}$ has density zero in set $\cS$. 
\end{cor}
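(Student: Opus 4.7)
The plan is to combine the two estimates established immediately above, since the corollary reduces to a one-line calculation once those inputs are in hand. First I would invoke the preceding asymptotic formula for $\cS(X)$, which yields
\[
\cS(X) \sim C\,\sqrt{X}(\log X)^2, \qquad C := \frac{7}{1920}\prod_{p}\Big(1-\tfrac1p\Big)^3\Big(1+\tfrac3p\Big).
\]
Before using this, I would verify that $C>0$: each local factor equals $1 - 3/p^2 + 2/p^3 = (1-1/p)^2(1+2/p) > 0$, so the Euler product converges to a strictly positive real number and $\cS(X)$ genuinely has order $\sqrt{X}(\log X)^2$.

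Next I would apply Theorem~\ref{thm:main}, which provides the upper bound
\[
\cS_n(X) \;\ll_n\; \frac{\sqrt{X}}{\log X}(\log\log X)^{\,n-1}
\]
for each fixed $n\ge 1$. Dividing the two estimates gives
\[
\frac{\cS_n(X)}{\cS(X)} \;\ll_n\; \frac{(\log\log X)^{\,n-1}}{(\log X)^{3}},
\]
and the right-hand side tends to $0$ as $X\to\infty$ since $\log\log X$ grows much more slowly than any positive power of $\log X$. This is exactly the statement that $\{K\in\cS:\ g_K=2^n\}$ has density zero in $\cS$.

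No genuine obstacle arises at the level of this corollary: all of the analytic difficulty (the Selberg--Delange application that produced the main term for $\cS(X)$, and the Sathe--Selberg input that controls $\cS_n(X)$) has already been dispatched in the two preceding theorems. The only small bookkeeping items are the positivity of the Euler product and the observation that a bounded power of $\log\log X$ is overwhelmed by a positive power of $\log X$. Finally, I would note in passing that summing this corollary over the finitely many admissible values $n\in\{0,1,2\}$ (those compatible with $\omega(\cD_K)\le 4$, which is forced by the existence of a Euclidean ideal via Corollary~\ref{cyclic:omega_result}) recovers Theorem~\ref{density_Euclidean} as an immediate consequence.
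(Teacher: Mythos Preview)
Your argument is correct and is precisely the intended one: the paper states the corollary without proof, leaving it as the obvious quotient of the bound $\cS_n(X)\ll_n \sqrt{X}(\log\log X)^{n-1}/\log X$ from Theorem~\ref{thm:main} by the asymptotic $\cS(X)\asymp\sqrt{X}(\log X)^2$. One inconsequential slip: the local Euler factor is $(1-1/p)^3(1+3/p)$, not $(1-1/p)^2(1+2/p)$, but positivity is immediate from the factored form anyway, so the conclusion $C>0$ stands.
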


\subsection*{Proof of \Cref{density_Euclidean}}
\begin{proof}
	By the proposition proved earlier, if $K\in\mathcal S$ has a Euclidean ideal, then $g_K=2^2$.
	Hence
	\[
	\mathcal S_{\mathcal E}\subseteq \bigcup_{m\le 2}\{K\in\mathcal S:\ g_K=2^m\}.
	\]
	For each fixed $m$,  Theorem~\ref{thm:main} gives
	\[
	\cS_m(X)=\#\{K\in\mathcal S:\ \omega(\Delta_K)=m,\ |\Delta_K|\le X\}
	\ll_m \frac{\sqrt X}{\log X}(\log\log X)^{m-1},
	\]
	whereas the total count satisfies $\#\mathcal S(X)\asymp \sqrt X\log^2 X$.
	Dividing shows $\cS_m(X)/\#\mathcal S(X)\to0$ as $X\to\infty$. Since
	$\mathcal S_{\mathcal E}(X)$ is contained in the  set   $ \cup_{m\leq 2}  \cS_m(X) $, therefore  same conclusion holds for $\mathcal S_{\mathcal E}(X)$.
	Thus
	\[
	\lim_{X\to\infty}\frac{\#\mathcal S_{\mathcal E}(X)}{\#\mathcal S(X)}=0,
	\]
	as claimed.
\end{proof}

\section*{Acknowledgments}

The author gratefully acknowledges the Institute of Mathematical Sciences (IMSc), Chennai, and the Max Planck Institute for Mathematics (MPIM), Bonn, for providing excellent research facilities and a stimulating environment. 
I would also like to express my sincere gratitude to Karthick Babu, Anup Dixit, Sanoli Gun, Pieter Moree, and
 B. Sury for their valuable discussions and insightful suggestions throughout the development of this work.

    	\bibliographystyle{amsplain}
	\bibliography{ref.bib}
\end{document}